\providecommand{\U}[1]{\protect\rule{.1in}{.1in}}
\theoremstyle{plain}
\newtheorem{corollary}{Corollary}
\newtheorem{lemma}{Lemma}
\newtheorem{remark}{Remark}
\newtheorem{theorem}{Theorem}
\numberwithin{equation}{section}
\newcommand{\eps}{\epsilon}
\begin{document}
\title[Maximum principles and Symmetry results ]{The Maximum Principles and
Symmetry results  for  Viscosity Solutions of Fully Nonlinear
Equations}
\author{Guozhen Lu and Jiuyi Zhu  }
\address{Guozhen Lu\\ \\Department of Mathematics\\
Wayne State University\\
Detroit, MI 48202, USA\\
Emails: gzlu@math.wayne.edu}
\address{ Jiuyi Zhu  \\
Department of Mathematics\\
Wayne State University\\
Detroit, MI 48202, USA\\
Emails:  jiuyi.zhu@wayne.edu }
\thanks{\noindent Research is partly supported by a US NSF grant.}
\date{}
\subjclass{35B50, 35B53, 35B06, 35D40, } \keywords {Viscosity solutions, Fully nonlinear
 equation, Pucc's extremal operators, Maximum principle, Radial symmetry, Punctured
ball.} \dedicatory{ }

\begin{abstract}
This paper is concerned about maximum principles and radial symmetry
for viscosity solutions of fully nonlinear partial differential
equations. We obtain the radial symmetry and monotonicity properties for
nonnegative viscosity solutions of
\begin{equation}
F( D^2 u)+u^p=0  \quad \quad \mbox{in} \ \mathbb R^n \label{abs}
\end{equation}
under the asymptotic decay rate $u=o(|x|^{-\frac{2}{p-1}})$ at
infinity, where $p>1$ (Theorem 1, Corollary 1). As a consequence of our symmetry results, we
obtain the nonexistence of any nontrivial and nonnegative solution
when $F$ is the Pucci extremal operators (Corollary 2). Our symmetry and
monotonicity results also apply to Hamilton-Jacobi-Bellman or Isaccs
equations.  A new maximum principle for viscosity solutions to fully
nonlinear elliptic equations is established (Theorem 2). As a result, different
forms of maximum principles on bounded and unbounded domains are
obtained. Radial symmetry, monotonicity and the corresponding
maximum principle for fully nonlinear elliptic equations in a
punctured ball are shown (Theorem 3). We also investigate the radial symmetry
for viscosity solutions of fully nonlinear parabolic partial
differential equations (Theorem 4).
\end{abstract}

\maketitle
\section{Introduction}

In studying partial differential equations, it is often of interest
to know if the solutions are radially symmetric. In this article, we
consider radial symmetry results for viscosity solutions of the
fully nonlinear elliptic equations
\begin{equation}
F( D^2 u)+u^p=0  \quad \quad \mbox{in} \ \mathbb R^n \label{equ1}
\end{equation}
and the Dirichlet boundary value problem in a punctured ball
\begin{equation}
\left \{ \begin{array}{lll}
 F( Du, D^2 u)+f(u)= 0 \quad \quad
&\mbox{in} \ \mathbb B\backslash \{0\}, \\
u=0 \quad & \mbox{on} \ \mathbb \partial B.
\end{array}
\right. \label{ball}
\end{equation}
We also obtain the radial symmetry for viscosity solutions of the fully
nonlinear parabolic equation
\begin{equation}
\left \{ \begin{array}{lll}
 \partial_t u- F( Du, D^2 u)-f(u)= 0 \quad \quad
&\mbox{in} \ \mathbb R^n\times (0, \ T], \\
u(x, 0)=u_0(x) \quad & \mbox{on} \ \mathbb R^n\times\{0\}.
\end{array}
\right. \label{equ2}
\end{equation}
We assume in the above that $F(Du, D^2 u)$ is a continuous function
defined on $\mathbb R^n\times S_n(\mathbb R)$, where  $S_n(\mathbb
R) $ is the space of real, $n\times n$ symmetric matrix, $f(u)$ is a
locally Lipschitz continuous function and the initial value $u_0(x)$
is continuous.  More precisely, we consider $F: \mathbb R^n\times
S_n(\mathbb R)\to \mathbb R$ satisfies the following structure
hypothesis.

 (\emph{F1}): There
exist $\gamma\geq 0$ and $0<\Lambda_1\leq\Lambda_2<\infty$ such that
for all $M, N\in S_n(\mathbb R)$ and $\xi_1, \xi_2\in \mathbb R^n$,
\begin{equation}
\mathcal{M}^-_{\Lambda_1, \Lambda_2}(M)-\gamma|\xi_1-\xi_2|\leq
F(\xi_1, M+N)-F(\xi_2, N)\leq \mathcal{M}^+_{\Lambda_1,
\Lambda_2}(M) +\gamma|\xi_1-\xi_2|, \label{str}
\end{equation}
where $\mathcal{M}^\pm_{\Lambda_1, \Lambda_2}$ are the Pucci
extremal operators, defined as
\begin{equation}
\mathcal{M}^+_{\Lambda_1, \Lambda_2}(M)=\Lambda_2\Sigma_{e_i>0}
e_i+\Lambda_1 \Sigma_{e_i<0}e_i, \label{puc1}
\end{equation}
\begin{equation}
\mathcal{M}^-_{\Lambda_1, \Lambda_2}(M)=\Lambda_1\Sigma_{e_i>0}
e_i+\Lambda_2 \Sigma_{e_i<0}e_i \label{puc2}
\end{equation}
where $e_i$, $i=1, \cdots, n$,  is an eigenvalue of $M$. \\

For any $M=(m_{ij})\in S_n(\mathbb R)$, let $M^{(k)}$ be the matrix
obtained from $M$ by replacing $m_{ik}$ and $m_{kj}$ by $-m_{ik}$
and $-m_{kj}$ for $i\not=k$, $j\not=k$, respectively. For any vector
$p$, let $$p^{(k)}=(p_1,\cdots, p_{k-1}, -p_k, p_{k+1}, \cdots,
p_n).$$ We assume the following hypothesis for $F$,

(\emph{F2}):
\begin{equation}
F(p^{(k)}, M^{(k)})=F(p, M) \label{equ}
\end{equation}
for $k=1, \cdots, n.$ \\

 Note that $M$ and $M^{(k)}$ have the same eigenvalues. In this
sense,
$$\mathcal{M}^{\pm}_{\Lambda_1, \Lambda_2}(M^{(k)})=\mathcal{M}^{\pm}_{\Lambda_1,
\Lambda_2}(M).$$ Under the hypotheses $(F1)$ and  $(F2)$,  it is
nature to see that the following hypotheses hold for the $F(D^2 u)$
in (\ref{equ1}), that is, \begin{equation} \mathcal{M}^-_{\Lambda_1,
\Lambda_2}(M)\leq F( M+N)-F( N)\leq \mathcal{M}^+_{\Lambda_1,
\Lambda_2}(M), \label{red1}
\end{equation}
\begin{equation}
F(M^{(k)})=F( M). \label{red2}
\end{equation}
Let $\xi_1=\xi_2$, the hypothesis $(F1)$ implies that the uniform
ellipticity for the fully nonlinear equation. Namely, there exist
$0<\Lambda_1\leq \Lambda_2<\infty$ such that
$$ \Lambda_1 tr(N)\leq F(\xi_1, M+N)-F(\xi_1, M)\leq \Lambda_2 tr(N)$$
for all $M, N\in S_n(\mathbb R), $ $N\geq 0$, where $tr(N)$ is the
trace of the matrix $N$. It is easy to see that the Pucci's
operators (\ref{puc1}), (\ref{puc2}) are extremal in the sense that
$$ \mathcal{M}^+_{\Lambda_1, \Lambda_2}(M)=\sup_{A\in\mathcal {A}_{\Lambda_1,\Lambda_2}}tr(AM), $$
$$ \mathcal{M}^-_{\Lambda_1, \Lambda_2}(M)=\inf_{A\in\mathcal {A}_{\Lambda_1,\Lambda_2}}tr(AM), $$
where $\mathcal {A}_{\Lambda_1,\Lambda_2}$ denotes the set of all
symmetric matrix whose eigenvalues lie in the interval $[\Lambda_1,
\Lambda_2]$.

The moving plane method is a powerful tool to show the radial
symmetry of solutions in partial differential equations. This method
goes back to A.D. Alexandroff  and then Serrin \cite{S} applies it
to elliptic equations for overdetermined problems. Gidas, Ni and
Nirenberg \cite{GNN1} further exploit this tool to obtain radial
symmetry of positive $C^2$ solutions of the Dirichlet boundary
problem for
$$ \triangle u+f(u)=0, \quad \quad f\in C^{0, 1}(\mathbb R)$$
in a ball. Notice that the Laplace operator corresponds to
$\Lambda_1=\Lambda_2=1$ in our $F(Du, D^2u)$. In \cite{GNN2}, Gidas,
Ni and Nirenberg extend their techniques to elliptic equations in
$\mathbb R^n$. By assuming that the solutions decay to zero at
infinity at a certain rate, the radial symmetry of positive
classical solutions is also derived. Further extensions and simpler
proofs are due to Berestycki and Nirenberg  \cite{BN} and C. Li
\cite{L}. For the detailed account and applications of the moving
plane method for semilinear elliptic equations, we refer to Chen and
Li's book \cite{CL} and references therein.

Radial symmetry results for classical solutions of fully nonlinear
elliptic equations are considered. See e.g. \cite{L} and  \cite{LN}.
Recently, Da Lio and Sirakov \cite{DS} studied the radial symmetry
for viscosity solutions of fully nonlinear elliptic equations. The
moving plane method is adapted to work in the setting of viscosity
solutions. We would like to mention that, in these quoted results
for radial symmetry in $\mathbb R^n$, a supplementary hypothesis
that $f(u)$ is nonincreasing in a right neighborhood of zero is
required. In the context of fully nonlinear equation $F(x, u, Du,
D^2u)=0$, it is equivalent to say that the operator $F$ is proper in
a right neighborhood of zero, i.e. the operator $F$ is nonincreasing
in $u$ in the case that $u$ is small.

We are particularly interested in the nonnegative viscosity
solutions of
\begin{equation} F(D^2 u)+u^p=0 \quad \quad \mbox{in} \ \mathbb R^n
\label{are}
\end{equation}
 for $p>1$.
Note that the proper assumption (that is, nonincreasing in $u$) for
fully nonlinear equation in (\ref{are}) is violated since
$f(u)=u^p$ is not nonincreasing any more. So the previous results no
longer hold for (\ref{are}). The typical models of (\ref{are}) are
the equations
\begin{equation}
\mathcal{M}^{\pm}_{\Lambda_1, \Lambda_2}(D^2u)+u^p=0 \quad \quad
\mbox{in} \ \mathbb R^n. \label{mod}
\end{equation}
It is well known that  the moving plane method and Kelvin transform
provide an elegant way of obtaining the Liouville-type theorems
 (i.e. the nonexistence of any solution) in \cite{CL1}. For (\ref{mod}),
the critical exponent for nonexistence of any viscosity solution is
still an open problem, since the Kelvin transform does not seem to
be available. Curti and Lenoi \cite{CL2} consider the nonnegative
supersolutions of (\ref{mod}), that is,
\begin{equation}
\mathcal{M}^{\pm}_{\Lambda_1, \Lambda_2}(M)+u^p\leq 0 \quad \quad
\mbox{in} \ \mathbb R^n. \label{supe}
\end{equation}
They show that the inequality (\ref{supe}) with
$\mathcal{M}^{+}_{\Lambda_1, \Lambda_2}$ has no non-trivial solution
for $1<p\leq \frac{n^\ast}{n^{\ast}-2}$, the inequality (\ref{supe})
with $\mathcal{M}^{-}_{\Lambda_1, \Lambda_2}$ has no non-trivial
solution provided $ 1<p\leq \frac{n_\ast}{n_\ast-2}$, where the
dimension like numbers are defined as
$$n^\ast=\frac{\Lambda_1}{\Lambda_2}(n-1)+1,$$
$$n_\ast=\frac{\Lambda_2}{\Lambda_1}(n-1)+1.$$
In order to understand the solution structure for (\ref{mod}),
Felmer and Quass \cite{FQ} consider (\ref{mod}) in the case of
radially symmetric solutions. Using phase plane analysis, they
establish that

{\bf Theorem A} (i): For  (\ref{mod}) with the Pucci extremal
operator $\mathcal{M}^{+}_{\Lambda_1, \Lambda_2}$, there exists no
non-trivial radial solution if $1<p<p^\ast_+$ and $n^\ast>2$, where
$$\max\{\frac{n^\ast}{n^\ast-2}, \,
\frac{n+2}{n-2}\}<p^\ast_+<\frac{n^\ast+2}{n^\ast-2}.$$ \quad (ii):
 For  (\ref{mod}) with the Pucci  extremal
operator $\mathcal{M}^{-}_{\Lambda_1, \Lambda_2}$, there exists no
non-trivial radial solution if $1<p<p^\ast_-$, where
$$\frac{n_\ast+2}{n_\ast-2}<p^\ast_-<\frac{n+2}{n-2}. $$

An explicit expression for $ p^\ast_+, \ p^\ast_-$ in term of
$\Lambda_1, \Lambda_2, n$ are still unknown. In order to obtain the
full range of  the exponent $p$ for the Liouville-type theorem in
(\ref{mod}), it is interesting to prove that the solutions in
(\ref{mod}) are radially symmetric.

We first consider the radial symmetry for the fully nonlinear
equations with general operator $F(D^2u)$ and show that

\begin{theorem}
Assume  $F(D^2u)$ satisfies  (\ref{red1}) and (\ref{red2}). Let
$n^\ast>2$. If $u\in C(\mathbb R^n)$ is a nonnegative non-trivial
solution of (\ref{equ1}) and
\begin{equation}
u=o(|x|^{-\frac{2}{p-1}}) \quad \mbox{as} \ |x|\to\infty \label{asp}
\end{equation}
for $p>1$, then $u$ is radially  symmetric and strictly decreasing
about some point. \label{th1}
\end{theorem}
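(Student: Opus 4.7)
The plan is to adapt the moving plane method of Gidas--Ni--Nirenberg to the viscosity-solution, fully-nonlinear setting, with the decay hypothesis \eqref{asp} playing the role usually played by the Kelvin transform. Fix a direction (take $e_1$ without loss of generality) and, for $\lambda\in\mathbb{R}$, let $T_\lambda=\{x_1=\lambda\}$, $\Sigma_\lambda=\{x_1>\lambda\}$, $x^\lambda=(2\lambda-x_1,x_2,\dots,x_n)$, and set $u_\lambda(x)=u(x^\lambda)$ and $w_\lambda(x)=u_\lambda(x)-u(x)$ on $\Sigma_\lambda$. Hypothesis \eqref{red2} (the invariance $F(M^{(1)})=F(M)$) ensures that $u_\lambda$ solves the same equation $F(D^2u_\lambda)+u_\lambda^p=0$ in $\Sigma_\lambda$, and then the structural bound \eqref{red1} converts the difference of the two equations into the viscosity differential inequality
\begin{equation*}
\mathcal{M}^+_{\Lambda_1,\Lambda_2}(D^2 w_\lambda)+c_\lambda(x)\,w_\lambda\;\geq\;0\qquad\text{in }\Sigma_\lambda,
\end{equation*}
where $c_\lambda(x)=(u_\lambda^{p}-u^{p})/(u_\lambda-u)$ is nonnegative, locally bounded, and by the decay \eqref{asp} satisfies $c_\lambda(x)=o(|x|^{-2})$ as $|x|\to\infty$. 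On the boundary hyperplane, $w_\lambda\equiv 0$, and by reflection symmetry only the values of $w_\lambda$ on $\Sigma_\lambda$ matter.

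Next I carry out the two standard moving-plane steps. \emph{Starting the procedure}: I need to find $\lambda_0$ large so that $w_\lambda\geq 0$ on $\Sigma_\lambda$ for every $\lambda\geq\lambda_0$. This is where the Kelvin transform is classically used; here I replace it by the maximum principle on unbounded domains for operators with small zero-order coefficient (an instance of Theorem~2 in the paper). Because $u$ decays and $c_\lambda(x)=o(|x|^{-2})$, the ``smallness'' of the first eigenvalue of $\mathcal{M}^+_{\Lambda_1,\Lambda_2}+c_\lambda$ on $\Sigma_\lambda$ holds once $\lambda$ is sufficiently large, so the ABP/maximum principle of Theorem~2 forces $w_\lambda\geq 0$ throughout $\Sigma_\lambda$. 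The condition $n^\ast>2$ enters here to guarantee that the critical decay threshold is admissible for the Pucci operator $\mathcal{M}^+_{\Lambda_1,\Lambda_2}$. \emph{Sliding down}: let
\begin{equation*}
\lambda_\ast=\inf\bigl\{\mu\in\mathbb{R}:\ w_\lambda\geq 0\text{ in }\Sigma_\lambda\text{ for all }\lambda\geq\mu\bigr\}.
\end{equation*}
I claim $w_{\lambda_\ast}\equiv 0$ in $\Sigma_{\lambda_\ast}$. If not, the strong maximum principle for viscosity subsolutions of $\mathcal{M}^+_{\Lambda_1,\Lambda_2}(D^2 w)+c_{\lambda_\ast}w\geq 0$ gives $w_{\lambda_\ast}>0$ in $\Sigma_{\lambda_\ast}$. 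Then I slide the plane past $\lambda_\ast$ by $\delta>0$: on a large ball $B_R\cap\Sigma_{\lambda_\ast-\delta}$ compactness plus strict positivity provides a uniform lower bound on $w_{\lambda_\ast-\delta}$, while outside $B_R$ the decay of $u$ again triggers the small-domain/small-coefficient maximum principle of Theorem~2 to conclude $w_{\lambda_\ast-\delta}\geq 0$, contradicting the definition of $\lambda_\ast$.

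Having shown $w_{\lambda_\ast}\equiv 0$, $u$ is symmetric with respect to $T_{\lambda_\ast}$ in the direction $e_1$. The decay hypothesis forces $\lambda_\ast$ to be finite (otherwise $u$ would be reflected in planes arbitrarily far out, contradicting nontriviality together with $u\to 0$). Repeating the argument for every unit direction $\nu$ and using that any function symmetric about a hyperplane in every direction is radial about the common intersection point, I obtain radial symmetry about some $x_0\in\mathbb{R}^n$. Strict monotonicity in $|x-x_0|$ follows from the strong maximum principle applied to $\partial_r u$-type increments in each direction (i.e., $w_\lambda>0$ for every $\lambda>\lambda_\ast$ in the radial direction).

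The main obstacle is the starting step: without Kelvin's transform, the only tool available to initialize the moving plane is the maximum principle in unbounded domains, and making this work under the sharp decay rate $u=o(|x|^{-2/(p-1)})$ requires the new Theorem~2 applied to the Pucci operator with a coefficient of order $o(|x|^{-2})$. Everything else---the strong maximum principle, the propagation step, and the assembly of radial symmetry from directional symmetry---is structurally the same as in \cite{DS,GNN2}, once the proper-in-$u$ hypothesis is replaced by the decay assumption in the unbounded tail.
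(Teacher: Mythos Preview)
Your overall architecture matches the paper's: moving planes in the $e_1$ direction, hypothesis \eqref{red2} to make $u_\lambda$ a solution, Lemma~\ref{diff} to get a linear inequality for the difference, and Theorem~\ref{max} (via the auxiliary weight $\psi(x)=|x|^{-q}$ with $0<q<\min\{2/(p-1),\,n^\ast-2\}$) to start the process. Two technical slips are worth noting: the correct inequality coming out of Lemma~\ref{diff} is $\mathcal{M}^-_{\Lambda_1,\Lambda_2}(D^2 w_\lambda)+c_\lambda w_\lambda\le 0$ (so that Theorem~\ref{max} applies to force $w_\lambda\ge 0$), not the $\mathcal{M}^+\ge 0$ version you wrote; and the estimate $c_\lambda(x)=o(|x|^{-2})$ is \emph{only} valid at points where $w_\lambda<0$ (since there $0\le u_\lambda\le u$, so $c_\lambda\le p\,u^{p-1}=o(|x|^{-2})$), not globally---at points where $u_\lambda>u$ the reflected value $u(x^\lambda)$ can be of order one even for large $|x|$. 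The paper is careful to restrict the decay estimate to the negativity set (Remark~\ref{rem2}).

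The real gap is in your continuation step. Your proposed split ``uniform positivity on $B_R\cap\Sigma_{\lambda_\ast-\delta}$, then Theorem~\ref{max} outside $B_R$'' does not close. First, $w_{\lambda_\ast-\delta}$ vanishes on the hyperplane $T_{\lambda_\ast-\delta}$, so there is no uniform positive lower bound on all of $B_R\cap\Sigma_{\lambda_\ast-\delta}$. Second, and more seriously, applying the decay-at-infinity maximum principle on $\Sigma_{\lambda_\ast-\delta}\cap B_R^c$ requires $w_{\lambda_\ast-\delta}\ge 0$ on the inner boundary $\partial B_R\cap\Sigma_{\lambda_\ast-\delta}$, which is precisely part of what you are trying to prove; the two pieces feed each other circularly. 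The paper avoids this by a different mechanism: it works throughout with the weighted function $\bar v_\lambda=v_\lambda/|x|^{-q}$ and proves separately (Lemma~\ref{lemq}) that any \emph{negative} minimum of $\bar v_\lambda$ must lie in a fixed ball $B_{\bar R}$, with $\bar R$ independent of $\lambda$. Then, assuming the plane cannot be pushed past $\lambda_0$, one extracts a sequence of minima $x^i\in B_{\bar R}$ for $\lambda_0+\epsilon_i$, passes to a limit $x^0$, uses the $C^{1,\alpha}$ regularity of viscosity solutions (Lemma~\ref{regu}) to conclude $\nabla v_{\lambda_0}(x^0)=0$, and observes that $x^0\in T_{\lambda_0}$ with $v_{\lambda_0}(x^0)=0$, contradicting the Hopf lemma (Lemma~\ref{hop}). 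This sequential/Hopf argument, together with the a priori localization Lemma~\ref{lemq}, is the missing ingredient in your sketch; without it the sliding step does not go through. A minor additional point you omitted: since $\psi=|x|^{-q}$ is singular at the origin, the paper translates so that the critical plane has $\lambda_0<0$, keeping $0\notin\Sigma_{\lambda_0}$.
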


In the same spirit of the proof in Theorem \ref{th1}, our
conclusions also hold for general function $f(u)$, i.e.
\begin{equation}
F(D^2 u)+f(u)=0 \quad \quad \mbox{in} \ \mathbb R^n. \label{gene}
\end{equation}
\begin{corollary}
Assume that $F(D^2 u)$ satisfies (\ref{red1}) and (\ref{red2}), and
\begin{equation}
\frac{f(u)-f(v)}{u-v}\leq c(|u|+|v|)^\alpha, \ \mbox{for} \ u, v \
\mbox{sufficiently small, and some }  \alpha>\frac{2}{n^\ast-2} \mbox{and} \ c>0.
\label{assf}
\end{equation}
Let $n^\ast>2$ and $u$ be a positive solution of (\ref{gene}) with
\begin{equation}
u(x)=O(|x|^{2-n^\ast})
\end{equation}
at infinity.  Then $u$ is radially symmetric and strictly decreasing
about some point in $\mathbb R^n$. \label{coro}
\end{corollary}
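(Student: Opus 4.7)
The plan is to follow the moving plane scheme used to prove Theorem~\ref{th1}, with the only new ingredient being how the growth condition (\ref{assf}) and the decay $u(x)=O(|x|^{2-n^\ast})$ combine to produce a zeroth-order coefficient that decays faster than $|x|^{-2}$. Fix a direction, say $e_1$, and for $\lambda\in\mathbb R$ set $T_\lambda=\{x_1=\lambda\}$, $\Sigma_\lambda=\{x_1<\lambda\}$, $x^\lambda=(2\lambda-x_1,x_2,\dots,x_n)$, $u_\lambda(x)=u(x^\lambda)$ and $w_\lambda(x)=u_\lambda(x)-u(x)$. The invariance (\ref{red2}) implies that $u_\lambda$ is also a viscosity solution of (\ref{gene}) on $\Sigma_\lambda$. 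Subtracting the two equations and using (\ref{red1}) together with (\ref{assf}), in the open set $\{w_\lambda<0\}\cap\Sigma_\lambda$ one obtains, in the viscosity sense,
\[
\mathcal{M}^+_{\Lambda_1,\Lambda_2}(D^2 w_\lambda)+c_\lambda(x)\,w_\lambda\geq 0,
\qquad
0\leq c_\lambda(x)\leq C\bigl(u(x)+u_\lambda(x)\bigr)^\alpha.
\]
The decay hypothesis then gives $c_\lambda(x)\leq C|x|^{-\alpha(n^\ast-2)}$, with $\alpha(n^\ast-2)>2$ by (\ref{assf}).

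The moving plane now proceeds in three steps. \textbf{Start:} show that $w_\lambda\geq 0$ on $\Sigma_\lambda$ for $-\lambda$ large. Since $u$ and $u_\lambda$ are small at infinity, $c_\lambda$ is uniformly small on $\Sigma_\lambda$ and decays faster than $|x|^{-2}$, so the maximum principle for $\mathcal{M}^+_{\Lambda_1,\Lambda_2}+c(x)$ on the unbounded domain $\Sigma_\lambda$ (an instance of Theorem~2, or of the Da~Lio--Sirakov framework) yields $w_\lambda\geq 0$. \textbf{Slide:} define $\lambda_0=\sup\{\lambda:\,w_\mu\geq 0\text{ on }\Sigma_\mu\text{ for every }\mu\leq\lambda\}$; by continuity $w_{\lambda_0}\geq 0$ on $\Sigma_{\lambda_0}$. \textbf{Conclude:} if $\lambda_0=+\infty$, repeating in the opposite direction forces $u\equiv 0$, contradicting positivity. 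Otherwise $\lambda_0$ is finite, and the strong maximum principle for Pucci's extremal operator applied to $w_{\lambda_0}$ gives either $w_{\lambda_0}\equiv 0$, i.e.\ symmetry about $T_{\lambda_0}$, or $w_{\lambda_0}>0$ strictly. In the latter case a compactness argument inside a large ball combined, outside that ball, with the smallness of $c_\lambda$ via the unbounded-domain maximum principle, allows one to push $\lambda$ slightly past $\lambda_0$, contradicting maximality. Since the direction $e_1$ was arbitrary, $u$ is radial about some point, and the same strong maximum principle gives strict radial decrease.

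The main obstacle is the push-past-$\lambda_0$ step. On any fixed ball one can use a small-domain-type Alexandroff--Bakelman--Pucci estimate for $\mathcal{M}^+_{\Lambda_1,\Lambda_2}+c_\lambda(x)$ as the reflected region gains a thin slab, but outside that ball one must rely on the decay of $c_\lambda$ to re-apply the unbounded-domain maximum principle; the two estimates have to be glued together entirely in the viscosity-solution framework, since $u$ is only continuous. The exponent restriction $\alpha>2/(n^\ast-2)$ is precisely the Hardy-type threshold that keeps $c_\lambda(x)=O(|x|^{-2-\delta})$, and it is indispensable both for initiating the procedure and for executing this final gluing argument.
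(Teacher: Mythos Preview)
Your overall strategy matches the paper's: moving planes plus a maximum principle exploiting that the zeroth-order coefficient decays like $|x|^{-\alpha(n^\ast-2)}$ with $\alpha(n^\ast-2)>2$. However, the differential inequality you record is the wrong one for this purpose. From Lemma~\ref{diff} (or directly from (\ref{red1})) one obtains the \emph{supersolution} inequality
\[
\mathcal{M}^-_{\Lambda_1,\Lambda_2}(D^2 w_\lambda)+c_\lambda(x)\,w_\lambda\leq 0,
\]
which is precisely the hypothesis of Theorem~\ref{max} and its ``decay at infinity'' corollary; those results then give a lower bound on $w_\lambda$. The inequality $\mathcal{M}^+_{\Lambda_1,\Lambda_2}(D^2 w_\lambda)+c_\lambda w_\lambda\geq 0$ that you wrote is also true, but it is a \emph{subsolution} inequality and controls only the supremum of $w_\lambda$; it cannot force $w_\lambda\geq 0$, and Theorem~\ref{max} does not apply to it. Also, hypothesis (\ref{assf}) gives only an upper bound on the difference quotient, so your claim $c_\lambda\geq 0$ is unjustified (and unnecessary).

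Once the correct inequality is in place, the paper verifies two conditions at points where $w_\lambda<0$: the coefficient bound $c_\lambda(\tilde x)\leq C|\tilde x|^{-2}$ (which you obtain, since there $u_\lambda(\tilde x)<u(\tilde x)=O(|\tilde x|^{2-n^\ast})$), and the boundary-at-infinity condition $\liminf_{|x|\to\infty}w_\lambda(x)|x|^q\geq 0$ for some $0<q<n^\ast-2$, which follows from $u=O(|x|^{2-n^\ast})$; you should make the latter explicit. For the push-past-$\lambda_0$ step the paper does not use an ABP/small-domain gluing as you sketch. Instead it works with $\bar v_\lambda=w_\lambda/|x|^{-q}$, shows (Lemma~\ref{lemq}) that any negative minimum of $\bar v_\lambda$ lies in a fixed ball independent of $\lambda$, extracts a limit point on $T_{\lambda_0}$, uses $C^{1,\alpha}$ regularity (Lemma~\ref{regu}) to get $\nabla w_{\lambda_0}=0$ there, and contradicts the Hopf lemma (Lemma~\ref{hop}). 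Your gluing alternative is reasonable in spirit, but carrying it out in the viscosity framework would require additional work, whereas the paper's route is self-contained with the tools already developed.
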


Once the radial symmetry property of solutions is established, with
the help of Theorem A, we immediately have the following corollary.
We hope that our symmetry results shed some light on the complicated
problem of Liouville-type theorems in (\ref{mod}) for the  full
range of the exponent $p$.

\begin{corollary}
(i) For  (\ref{mod}) with the Pucci extremal operator
$\mathcal{M}^{+}_{\Lambda_1, \Lambda_2}$ , there exists no
non-trivial nonnegative solution satisfying (\ref{asp}) if $1<p<p^\ast_+$ and $n^\ast>2$. \\
(ii) For  (\ref{mod}) with the Pucci extremal operator
$\mathcal{M}^{-}_{\Lambda_1, \Lambda_2}$, there exists no
non-trivial nonnegative solution satisfying (\ref{asp})  if
$1<p<p^\ast_-$ and $n^\ast>2$.
\end{corollary}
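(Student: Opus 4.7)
The plan is to combine Theorem~\ref{th1} with Theorem~A in a direct way. First I would check that the Pucci extremal operators $\mathcal{M}^{\pm}_{\Lambda_1,\Lambda_2}$ satisfy the structure hypotheses (\ref{red1}) and (\ref{red2}) needed to invoke Theorem~\ref{th1}. Hypothesis (\ref{red1}) is immediate from the variational characterization of $\mathcal{M}^{\pm}_{\Lambda_1,\Lambda_2}$ as the supremum and infimum of $\mathrm{tr}(AM)$ over $A\in\mathcal{A}_{\Lambda_1,\Lambda_2}$, which forces $\mathcal{M}^{\pm}$ to be sub/super-additive in the way required. Hypothesis (\ref{red2}) follows because the transformation $M\mapsto M^{(k)}$ is conjugation by the diagonal sign-flip matrix $\mathrm{diag}(1,\ldots,1,-1,1,\ldots,1)$, so $M^{(k)}$ has the same eigenvalues as $M$ and hence the same value of $\mathcal{M}^{\pm}_{\Lambda_1,\Lambda_2}$.

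Next, given a nonnegative non-trivial viscosity solution $u\in C(\mathbb{R}^n)$ of $\mathcal{M}^{+}_{\Lambda_1,\Lambda_2}(D^2u)+u^p=0$ (respectively $\mathcal{M}^{-}_{\Lambda_1,\Lambda_2}$) satisfying the decay (\ref{asp}), I would apply Theorem~\ref{th1} directly to conclude that $u$ is radially symmetric and strictly decreasing about some point $x_0\in\mathbb{R}^n$. The equation is invariant under translation since $F$ depends only on $D^2u$, so $v(x):=u(x+x_0)$ is a nonnegative non-trivial radial solution of the same Pucci equation, strictly decreasing about the origin. The non-triviality of $v$ is preserved because $u\not\equiv0$.

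Finally I would invoke Theorem~A to reach a contradiction. In case (i), the hypotheses $1<p<p^\ast_+$ and $n^\ast>2$ are precisely those under which Theorem~A(i) rules out any non-trivial radial solution of the $\mathcal{M}^{+}$ equation, so the existence of $v$ is impossible and no such $u$ can exist. Case (ii) is identical, using Theorem~A(ii) with the range $1<p<p^\ast_-$; the assumption $n^\ast>2$ is needed only to apply Theorem~\ref{th1}.

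I do not expect any genuine obstacle here: the whole content of the corollary is that the new radial symmetry result (Theorem~\ref{th1}) reduces the general nonexistence problem to the radial nonexistence problem already settled by Felmer and Quaas. The only points requiring care are the verifications that the Pucci operators meet the symmetry hypothesis (\ref{red2}), that the equation is translation invariant so one may center the radial profile at the origin, and that the decay hypothesis (\ref{asp}) carried by $u$ is inherited by $v$ after translation — all of which are immediate.
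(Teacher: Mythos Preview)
Your proposal is correct and matches the paper's approach exactly: the paper does not give a formal proof of this corollary but simply states that ``once the radial symmetry property of solutions is established, with the help of Theorem~A, we immediately have the following corollary.'' You have merely filled in the routine verifications (that $\mathcal{M}^{\pm}_{\Lambda_1,\Lambda_2}$ satisfy (\ref{red1})--(\ref{red2}), translation invariance, etc.) that the paper leaves implicit.
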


In carrying out the moving plane method, the maximum principle plays
a crucial role. In order to adapt the moving plane method to
non-proper fully nonlinear equations (that is, $F(x, u, Du, D^2u)$
is not nondecreasing in $u$), a new maximum principle has to be
established for viscosity solutions.

In this paper,
we will establish  a new maximum principle for viscosity solutions to the equation
$$
\mathcal{M}_{\Lambda_1, \Lambda_2}^-(D^2 u)-\gamma|Du|+c(x)u\leq 0
\quad \mbox{in} \ \Omega,$$ where $c(x)\in L^\infty(\Omega)$ is not
necessarily  negative. Similar maximum principle for classical solutions to semilinear equations was given in \cite{CL}. Since we consider the viscosity solutions
here instead of classical solutions in \cite{CL}, considerably more  difficulties have
to be taken care of in our case. Unlike the pointwise argument in \cite{CL}, we
apply the Hopf lemma for viscosity solutions in those minimum
points.    More
specifically, we have

\begin{theorem}
Let $\Omega$ be a bounded domain. Assume that $\lambda(x), c(x)\in
L^{\infty}(\Omega), \gamma\geq 0$, and $\psi\in C^2(\Omega)\cap
C^1(\bar\Omega)$ is a positive solution in $\bar\Omega$ satisfying
\begin{equation}
\mathcal{M}_{\Lambda_1, \Lambda_2}^+(D^2\psi)+\lambda(x)\psi\leq 0.
\label{aux}
\end{equation}
Let $u$ be a viscosity solution of
\begin{equation}
\left \{ \begin{array}{lll} \mathcal{M}_{\Lambda_1, \Lambda_2}^-(D^2
u)-\gamma|Du|+c(x)u\leq 0  \quad &\mbox{in} \ \Omega,\\
u\geq 0 \quad &\mbox{on} \ \partial\Omega. \label{key}
\end{array}
\right.
\end{equation}
If
\begin{equation}
c(x)\leq \lambda(x)-\gamma|D\psi|/\psi \label{est},
\end{equation}
then $u\geq 0$ in $\Omega$. \label{max}
\end{theorem}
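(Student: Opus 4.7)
The plan is to argue by contradiction and reduce to a Hopf lemma / strong maximum principle situation via a shift by the auxiliary function $\psi$. Suppose toward contradiction that $u \ge 0$ fails in $\Omega$. Since $\psi > 0$ on the compact set $\bar\Omega$, the quotient $v := u/\psi$ is continuous, with $v \ge 0$ on $\partial\Omega$ while $v < 0$ somewhere inside; hence $m := \inf_\Omega v < 0$ is attained at some interior point $x_0 \in \Omega$. Set $\tilde u := u - m\psi = (v-m)\psi$, so that $\tilde u \ge 0$ in $\Omega$, $\tilde u(x_0) = 0$, and $\tilde u > 0$ on $\partial\Omega$ (because $u \ge 0$ and $-m\psi > 0$ there).

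The central step is to show that $\tilde u$ itself satisfies the same viscosity supersolution inequality,
\[
\mathcal{M}^-_{\Lambda_1,\Lambda_2}(D^2\tilde u) - \gamma|D\tilde u| + c(x)\tilde u \le 0 \quad \text{in }\Omega.
\]
Given any $C^2$ function $\phi$ touching $\tilde u$ from below at some $y \in \Omega$, the shift $\phi + m\psi$ is a valid $C^2$ lower test for $u$ at $y$, so the supersolution property of $u$ gives
\[
\mathcal{M}^-(D^2\phi + m D^2\psi) - \gamma|D\phi + m D\psi| + c(y)u(y) \le 0.
\]
I then estimate the three pieces separately. Using the superadditivity $\mathcal{M}^-(A+B) \ge \mathcal{M}^-(A) + \mathcal{M}^-(B)$ together with $\mathcal{M}^-(\alpha M) = \alpha\mathcal{M}^+(M)$ for $\alpha < 0$ and the hypothesis $\mathcal{M}^+(D^2\psi) + \lambda\psi \le 0$ yields $\mathcal{M}^-(D^2\phi) \le \mathcal{M}^-(D^2\phi + mD^2\psi) + m\lambda\psi$; the triangle inequality gives $|D\phi + mD\psi| \le |D\phi| - m|D\psi|$; and $u(y) = \tilde u(y) + m\psi(y)$. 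Substituting and collecting, the entire defect contributed by the $m\psi$ shift bundles exactly into $m\psi(y)\bigl[\lambda(y) - c(y) - \gamma|D\psi(y)|/\psi(y)\bigr]$, which is $\le 0$ because $m\psi < 0$ and the bracket is $\ge 0$ by hypothesis (\ref{est}). The desired inequality for $\tilde u$ at $y$ drops out.

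With $\tilde u \ge 0$ a viscosity supersolution of a uniformly elliptic Pucci-type equation with bounded first- and zero-order coefficients (the term $c(x)\tilde u \ge -\|c\|_\infty\tilde u$ absorbs the sign of $c$), and $\tilde u(x_0) = 0$ at an interior point, I select a ball $B \subset \{\tilde u > 0\}$ whose closure touches $\{\tilde u = 0\}$ at a point $z$ lying in the interior of $\Omega$ (such a ball exists because $\tilde u > 0$ on $\partial\Omega$ while $\tilde u(x_0) = 0$). The Hopf boundary lemma for viscosity supersolutions of Pucci operators then produces a strict inward derivative sign for $\tilde u$ at $z$; combined with the fact that $z$ lies in the interior of $\Omega$, this forces $\tilde u$ to become strictly negative just past $z$, contradicting $\tilde u \ge 0$ throughout $\Omega$. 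Hence $u \ge 0$ in $\Omega$.

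The main obstacle I anticipate is the middle step. Because $\mathcal{M}^-$ is nonlinear, one cannot simply subtract $m\psi$ from the supersolution inequality for $u$; the reduction rests entirely on the superadditive/homogeneity algebra of the Pucci operators and on the precise way the defect from $\gamma|D(\phi + m\psi)|$ is compensated by the $\gamma|D\psi|/\psi$ correction built into the standing hypothesis (\ref{est}). This matching is exactly why (\ref{est}) must take the form stated, and why the argument would fail under a weaker hypothesis on $c$.
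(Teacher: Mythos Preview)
Your proof is correct and takes a genuinely different route from the paper's. The paper works directly with the quotient $\bar u = u/\psi$: via a product-rule computation (their Lemma~\ref{tec}) it derives a \emph{new} inequality for $\bar u$ with modified drift $\bar b(x)=2\sqrt{n}\Lambda_2|D\psi|/\psi+\gamma$ and modified zero-order coefficient $\bar c(x)=c(x)+(\mathcal{M}^+(D^2\psi)+\gamma|D\psi|)/\psi$, checks that $\bar c\le 0$ under (\ref{aux})--(\ref{est}), and then applies the strong maximum principle (Lemma~\ref{str}) to $\bar u$ on a neighborhood where it is negative. Your approach instead uses the additive shift $\tilde u=u-m\psi$ and shows it satisfies the \emph{same} inequality as $u$; the superadditivity $\mathcal{M}^-(A+B)\ge\mathcal{M}^-(A)+\mathcal{M}^-(B)$ together with $\mathcal{M}^-(mD^2\psi)=m\mathcal{M}^+(D^2\psi)$ cleanly packages the defect into $m\psi[\lambda-c-\gamma|D\psi|/\psi]\le 0$, with no cross term to estimate. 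This is algebraically lighter than the paper's multiplicative change of unknown, which has to control the tensor $D\phi\otimes D\psi$ via the crude bound $\sqrt{n}\Lambda_2|D\phi||D\psi|$. Your final Hopf-lemma step is fine, but note that the paper's Lemma~\ref{hop} already contains the dichotomy ``$\tilde u\equiv 0$ or $\tilde u>0$'' for nonnegative supersolutions with bounded $c(x)$, so once you have $\tilde u\ge 0$, $\tilde u(x_0)=0$, and $\tilde u>0$ on $\partial\Omega$, you can conclude immediately without constructing the touching ball.
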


Note that the function $c(x)$ may not be needed to be negative in
order that the maximum principle holds. We also would like to point
out the $\psi$ is a supersolution of the equation involving
$\mathcal{M}_{\Lambda_1, \Lambda_2}^+(D^2\psi)$ instead of
$\mathcal{M}_{\Lambda_1, \Lambda_2}^-(D^2\psi)$. If a specific
$\psi(x)$ is chosen, we can get the explicit control for $c(x)$ in
order to obtain the maximum principle for (\ref{key}). We are also
able to extend the maximum principle to unbounded domains. We refer
to Section 3 for more details.

Recently Caffarelli, Li and Nirenberg \cite{CLN} \cite{CLN1}
investigated the following problem
\begin{equation}
\left \{ \begin{array}{lll} \triangle u+f(u)=0 \quad \quad
&\mbox{in}
\  \mathbb B\backslash \{0\}, \\
u=0 \quad & \mbox{in} \  \partial \mathbb B
\end{array}
\right.
\end{equation}
in the case that $f$ is locally Lipschitz. They obtained the radial
symmetry and monotonicity property of solutions using an idea of
Terracini \cite{T}. Their results are also extended to fully
nonlinear equations $F(x, u, Du, D^2u)=0$ with differentiable
components for $u\in C^2(\bar {\mathbb B}\backslash \{0\})$.
However, this prevents us from applying these results to important
classes of equations such as equations involving Pucci's extremal
operators, Hamilton-Jacobi-Bellman or Isaacs equations. A maximum
principle in a punctured domain is established in \cite{CLN}  in
order to apply the moving plane technique. However, their maximum
principle only holds in sufficiently small domains, since sufficient
smallness of the domain is used in the spirit of
Alexandroff-Pucci-Belman maximum principle (see \cite{BN}).

In this paper,
we consider
$$
\mathcal{M}_{\Lambda_1, \Lambda_2}^-(D^2 u)-\gamma|Du|+c(x)u\leq 0
\quad \ \mbox{in} \ \Omega\backslash \{0\}.$$ We will obtain  a new
maximum principle in terms of the assumption of $c(x)$ (see Lemma
\ref{sma}). It is especially true for a sufficiently small domain
just as Caffarelli, Li and Nirenberg's maximum principle, since the
bound of $c(x)$ in Lemma \ref{sma} preserves automatically if $|x|$
is small enough. Our result is not only an extension for viscosity
solutions, but also an interesting result for semilinear elliptic
equations. Furthermore, we obtain the radial symmetry of solutions
in a punctured ball.
\begin{theorem}
Let $u\in C(\mathbb B\backslash \{0\})$ be a positive viscosity
solution of (\ref{ball}) in the case that $f(u)$ is locally Lipschitz.
Then $u$ is radially symmetric with respect to the origin and $u$ is
strictly decreasing in $|x|$. \label{th2}
\end{theorem}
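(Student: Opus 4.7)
The plan is to apply the moving plane method in each direction passing through the origin. By rotational invariance of the conclusion, it suffices to fix the direction $e_1$ and prove that $u$ is symmetric with respect to the hyperplane $T_0 = \{x_1 = 0\}$; varying the direction then yields radial symmetry about $0$. For $\lambda \in (-1, 0]$, set $\Sigma_\lambda = \{x \in \mathbb{B} : x_1 < \lambda\}$, $T_\lambda = \{x_1 = \lambda\}$, $p_\lambda = (2\lambda, 0, \ldots, 0)$, and let $x^\lambda$ denote reflection across $T_\lambda$. Define $u_\lambda(x) = u(x^\lambda)$; this is defined on $\Sigma_\lambda \setminus \{p_\lambda\}$, since reflection sends the singularity $\{0\}$ of $u$ to $\{p_\lambda\}$. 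Hypothesis $(F2)$ guarantees that $u_\lambda$ satisfies the same equation on $\Sigma_\lambda \setminus \{p_\lambda\}$, and $(F1)$ combined with the local Lipschitz continuity of $f$ yields that $w_\lambda := u_\lambda - u$ satisfies
$$
\mathcal{M}^-_{\Lambda_1, \Lambda_2}(D^2 w_\lambda) - \gamma |Dw_\lambda| + c_\lambda(x)\, w_\lambda \leq 0 \quad \text{in } \Sigma_\lambda \setminus \{p_\lambda\}
$$
in the viscosity sense, with $c_\lambda \in L^\infty_{\mathrm{loc}}$.

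For $\lambda$ sufficiently close to $-1$, $\Sigma_\lambda$ is a thin lens with $p_\lambda$ outside $\overline{\mathbb{B}}$. We have $w_\lambda = 0$ on $T_\lambda \cap \mathbb{B}$ and $w_\lambda \geq 0$ on $\partial \mathbb{B} \cap \partial \Sigma_\lambda$ (since $u|_{\partial \mathbb{B}} = 0$ while $u_\lambda \geq 0$), and the smallness of $\Sigma_\lambda$ makes $c_\lambda$ admissible in Lemma \ref{sma}, which yields $w_\lambda \geq 0$. Define
$$
\lambda_0 = \sup\{\lambda \in (-1, 0] : w_\mu \geq 0 \text{ on } \Sigma_\mu \setminus \{p_\mu\} \text{ for all } \mu \in (-1, \lambda]\}.
$$
The goal is to show $\lambda_0 = 0$. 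Suppose for contradiction $\lambda_0 < 0$. By continuity $w_{\lambda_0} \geq 0$, and excluding $w_{\lambda_0} \equiv 0$ (which would force the singularity at $p_{\lambda_0}$ to propagate to $0$), the strong maximum principle for viscosity solutions gives $w_{\lambda_0} > 0$ strictly on $\Sigma_{\lambda_0} \setminus \{p_{\lambda_0}\}$, with a Hopf-type normal derivative estimate on $T_{\lambda_0} \cap \mathbb{B}$.

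To push past $\lambda_0$, fix small $\epsilon > 0$ and decompose $\Sigma_{\lambda_0 + \epsilon} \setminus \{p_{\lambda_0 + \epsilon}\}$ into a compact piece $K$ bounded away from $T_{\lambda_0}$, $p_{\lambda_0}$, and $\partial \mathbb{B}$ (where continuity transfers strict positivity of $w_{\lambda_0}$ to $w_{\lambda_0 + \epsilon}$), a thin slab adjacent to $T_{\lambda_0 + \epsilon} \cap \mathbb{B}$, and small punctured neighborhoods of $p_{\lambda_0 + \epsilon}$ and $\partial \mathbb{B} \cap \partial \Sigma_{\lambda_0 + \epsilon}$. On the latter regions the coefficient $c_{\lambda_0 + \epsilon}$ is admissible in Lemma \ref{sma} because the required smallness condition holds automatically in thin slabs and in small-radius balls, giving $w_{\lambda_0 + \epsilon} \geq 0$ there and contradicting the definition of $\lambda_0$. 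Hence $\lambda_0 = 0$, so $w_0 \geq 0$. Running the argument in the direction $-e_1$ gives the reverse inequality, so $w_0 \equiv 0$: $u$ is symmetric across $T_0$. Varying $e_1$ yields radial symmetry about $0$, and the strict inequality $w_\lambda > 0$ for $\lambda \in (-1, 0)$ gives strict monotonicity of $u$ in $|x|$.

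The principal obstacle is the puncture itself: once $\lambda \in (-1/2, 0]$, the reflected singular point $p_\lambda$ sits inside $\Sigma_\lambda$ and $u_\lambda$ need not be controlled there, so the classical maximum principle on $\Sigma_\lambda$ is unavailable. The punctured-domain maximum principle of Lemma \ref{sma}, which tolerates a removed interior singularity and a zeroth-order coefficient $c(x)$ of either sign (subject to a bound that is automatic in suitably thin or small-radius subregions), is precisely what lets the scheme traverse this threshold. A secondary technical point is verifying that $w_{\lambda_0} \not\equiv 0$; this uses the fact that $u \in C(\mathbb{B} \setminus \{0\})$ cannot be simultaneously regular at $p_{\lambda_0}$ (inherited from $u_{\lambda_0}$) and singular at $0$ in a compatible way.
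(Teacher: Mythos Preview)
Your approach is essentially the paper's: moving planes, a compact/thin decomposition to push past $\lambda_0$, and the punctured maximum principle (Lemma~\ref{sma}) in a small ball about the reflected singularity. The paper merely organizes Step~2 into three explicit cases according to whether $p_{\lambda_0}$ lies outside $\overline{\mathbb B}$, on $\partial\mathbb B$, or in the interior, but the mechanism is the same as your single decomposition.

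Two points need repair. First, your exclusion of $w_{\lambda_0}\equiv 0$ by ``singularity propagation'' is not valid: the hypothesis $u\in C(\mathbb B\setminus\{0\})$ does not assert that $u$ is actually singular at $0$, so showing that $u$ would extend continuously there yields no contradiction. The correct argument uses the Dirichlet data: on $\partial\mathbb B\cap\partial\Sigma_{\lambda_0}$ one has $u=0$ while $u_{\lambda_0}(x)=u(x^{\lambda_0})>0$ because $x^{\lambda_0}$ is an interior point of $\mathbb B\setminus\{0\}$, so $w_{\lambda_0}>0$ there and hence $w_{\lambda_0}\not\equiv 0$.

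Second, you invoke Lemma~\ref{sma} indiscriminately---for Step~1 and for the thin slab near $T_{\lambda_0+\epsilon}$---but its hypothesis (\ref{ccc}) is a bound on $c(x)$ in terms of the distance to the puncture, not a small-measure condition; it does not become ``admissible'' just because a slab is thin. When $p_\lambda\notin\Sigma_\lambda$ (as in Step~1) or on the portion of $\Sigma_{\lambda_0+\epsilon}\setminus K$ away from $p_{\lambda_0+\epsilon}$, there is no puncture and the correct tool is the small-measure maximum principle (Lemma~\ref{min}). Lemma~\ref{sma} is reserved for a small ball centred at the reflected singular point; this is precisely how the paper splits Cases~2 and~3.
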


Finally,  we consider the radial symmetry of the Cauchy problem for
viscosity solutions of the fully nonlinear parabolic equation
(\ref{equ2}). C. Li \cite{L} obtain the monotonicity and radial
symmetry properties of classic solution $u\in C^2 (\mathbb R^n\times
(0, T])$ for fully nonlinear parabolic equations $\partial_t u-F(x, u, Du, D^2u)=0$ with
differentiable components. Again this result does not apply to fully
nonlinear parabolic equations involving  Pucci extremal operators,
Hamilton-Jacobi-Bellman or Iassac equations.
 For further extensions  about asymptotic symmetry or
radial symmetry of entire solutions, etc. for parabolic problems on
bounded or unbounded domains, we refer to the survey of
Pol$\acute{a}\check{c}$ik \cite{P}.

In this paper, we prove that
\begin{theorem}
Let $u\in C(\mathbb R^n\times (0, T])$ be a positive viscosity
solution of   (\ref{equ2}). Assume that
\begin{equation}
 |u(x,t)|\to 0 \quad \mbox{uniformly as}  \ |x|\to\infty,
 \label{inft}
\end{equation}
and
$$u_0(x_1, x')\leq u_0(y, x')  \ \mbox{for}  \ x_1\leq y\leq
-x_1, \ x_1\leq 0 \ \mbox{and} \ x'=(x_2,\cdots,x_n).$$ Then $u$ is
nondecreasing in $x_1$ and $u(x_1, x', t)\leq u(-x_1, x', t)$ for
$x_1\leq 0$. Furthermore, if $u_0(x)$ is radially symmetric with
respect to the origin and nonincreasing in $|x|$, then $u(x)$ is radial
symmetry
 with respect to $(0,t)$ for each fixed $t\in (0, T]$ and nonincreasing in $|x|$.
 \label{th3}
\end{theorem}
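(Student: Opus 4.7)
The plan is to run the parabolic moving plane method in the $x_1$-direction and handle the indefinite sign of the zero-order coefficient via the exponential shift $\tilde w = e^{-\beta t} w$. For $\lambda\in\mathbb{R}$ set $\Sigma_\lambda=\{x\in\mathbb{R}^n:x_1<\lambda\}$, $x^\lambda=(2\lambda-x_1,x_2,\ldots,x_n)$, $u_\lambda(x,t)=u(x^\lambda,t)$, and $w_\lambda=u_\lambda-u$. Hypothesis (F2) applied to $k=1$ implies $u_\lambda$ solves the same parabolic equation as $u$. Since $u$ is continuous and satisfies (\ref{inft}), $u$ is bounded on $\mathbb{R}^n\times[0,T]$; combined with local Lipschitz continuity of $f$ there is $L>0$ and $c_\lambda\in L^\infty(\Sigma_\lambda\times(0,T])$ with $\|c_\lambda\|_\infty\le L$ so that $f(u_\lambda)-f(u)=c_\lambda w_\lambda$. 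Using (F1), $w_\lambda$ is then a viscosity supersolution of
\begin{equation*}
\partial_t v-\mathcal{M}^-_{\Lambda_1,\Lambda_2}(D^2v)+\gamma|Dv|-c_\lambda v=0\quad\text{in }\Sigma_\lambda\times(0,T].
\end{equation*}

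The first main step is to show $w_\lambda\ge 0$ on $\Sigma_\lambda\times(0,T]$ for every $\lambda\le 0$. On the parabolic boundary: $w_\lambda\equiv 0$ on $\{x_1=\lambda\}\times[0,T]$; for $x\in\Sigma_\lambda$ with $\lambda\le 0$ one checks $x_1<(x^\lambda)_1\le -x_1$, so the monotonicity hypothesis on $u_0$ gives $w_\lambda(\cdot,0)\ge 0$; and $w_\lambda\to 0$ uniformly as $|x|\to\infty$ by (\ref{inft}). Suppose for contradiction that $\inf w_\lambda<0$. Pick $\beta>L$ and let $\tilde w_\lambda=e^{-\beta t}w_\lambda$; a change-of-variables in test functions shows $\tilde w_\lambda$ is a viscosity supersolution of
\begin{equation*}
\partial_t v-\mathcal{M}^-_{\Lambda_1,\Lambda_2}(D^2v)+\gamma|Dv|+(\beta-c_\lambda)v=0,
\end{equation*}
whose zero-order coefficient $\beta-c_\lambda$ is strictly positive. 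The uniform decay and the parabolic-boundary conditions force any negative infimum of $\tilde w_\lambda$ to be attained at some $(x_0,t_0)\in\Sigma_\lambda\times(0,T]$; testing with the constant function $\phi\equiv\tilde w_\lambda(x_0,t_0)$ in the viscosity supersolution inequality yields
\begin{equation*}
(\beta-c_\lambda(x_0,t_0))\,\tilde w_\lambda(x_0,t_0)\ge 0,
\end{equation*}
contradicting $\tilde w_\lambda(x_0,t_0)<0$.

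From $w_\lambda\ge 0$ on $\Sigma_\lambda\times(0,T]$ for every $\lambda\le 0$ I read off: for $x_1<y\le-x_1$ with $x_1\le 0$, choosing $\lambda=(x_1+y)/2\le 0$ gives $u(x_1,x',t)\le u(y,x',t)$, so $u$ is nondecreasing in $x_1$ for $x_1\le 0$; and $\lambda=0$ gives $u(x_1,x',t)\le u(-x_1,x',t)$. For the radial conclusion, if $u_0$ is radial and nonincreasing in $|x|$ then for every unit vector $e$ the initial-data hypothesis holds with $e$ playing the role of $e_1$; applying the argument in both the $e$ and $-e$ directions gives $u(x,t)=u(x-2(x\cdot e)e,t)$, i.e.\ $u(\cdot,t)$ is invariant under every reflection through a hyperplane containing the origin, hence radial and nonincreasing in $|x|$ for each $t\in(0,T]$.

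The principal technical point is the parabolic maximum principle underlying the contradiction step: because the cylinder $\Sigma_\lambda\times(0,T]$ is unbounded and $c_\lambda$ is merely bounded with indefinite sign, one neutralizes the sign via the exponential shift, exploits (\ref{inft}) to localize the negative infimum in the $x$-variable, and if needed perturbs $\tilde w_\lambda$ by $\varepsilon t$ to ensure the infimum is attained at an interior $(x_0,t_0)$ with $t_0<T$ before letting $\varepsilon\to 0$. Once this ingredient is secured, the moving plane runs cleanly.
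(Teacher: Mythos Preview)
Your argument is correct and follows the same scheme as the paper: moving planes in $x_1$, exponential time-weight to make the zero-order coefficient of the linearized inequality have a definite sign, and a maximum principle on the unbounded cylinder using the uniform decay (\ref{inft}) and the initial-data hypothesis to rule out an interior negative minimum. Your execution is in fact slightly cleaner than the paper's: you observe that the contradiction argument works directly for every $\lambda\le 0$, so there is no need for the paper's two-step ``start the plane / push to $\lambda_0$'' structure (the paper's Step~2 just repeats Step~1), and you obtain the contradiction by testing the shifted supersolution against a constant at the interior minimum rather than invoking a strong maximum principle.

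One point you pass over too quickly: the sentence ``Using (F1), $w_\lambda$ is then a viscosity supersolution of $\partial_t v-\mathcal{M}^-_{\Lambda_1,\Lambda_2}(D^2v)+\gamma|Dv|-c_\lambda v=0$'' is not a direct consequence of (F1) when $u$ and $u_\lambda$ are merely viscosity solutions. The fact that the difference of a viscosity sub- and supersolution of $\partial_t u-F(Du,D^2u)-f(u)=0$ satisfies the linearized Pucci inequality in the viscosity sense requires a doubling-of-variables / semijet argument; this is exactly the content of the paper's Lemma~\ref{ppp} (the parabolic analogue of the elliptic Lemma~\ref{diff} from Da~Lio--Sirakov). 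You should either cite such a result or supply the argument; once that is in place, the rest of your proof goes through.
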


The outline of the paper is as follows. In Section 2, we present the
basic results for the definition of viscosity solutions, the strong
maximum principle and the maximum principle in a small domain for
viscosity solutions, etc. Section 3 is devoted to providing the
proof of Theorem \ref{th1} and Theorem \ref{max}. New maximum
principles and their extensions are established. The radial symmetry
of solutions in a punctured ball and the corresponding maximum
principle are obtained in Section 4. In Section 5, we prove the
radial symmetry for viscosity solutions of fully nonlinear parabolic
equations. Throughout the paper, The letters $C$, $c$ denote generic
positive constants, which is independent of $u$ and may vary from
line to line.

\section{Preliminaries}

In this section we collect some basic results which will be applied
 through the paper for fully nonlinear partial differential equations. We refer to
 \cite{CC}, \cite{CCKS}, \cite{CIL}, and  references therein for a detailed account.

Let us recall the notion of viscosity sub and supersolutions of the
fully nonlinear elliptic equation
\begin{equation}
F(Du, D^2u)+f(u)=0 \quad \ \mbox{in} \ \Omega, \label{def}
\end{equation}
where $\Omega$ is an open domain in $\mathbb R^n$ and $F: \mathbb
R^n\times S_n(\mathbb R)\to \mathbb R$ is a continuous map with
$F(p,M)$ satisfying \emph{(F1)}.

\emph{Definition:} A continuous function $u: \Omega \to \mathbb R$
is a viscosity supersolution (subsolution) of (\ref{def}) in $\Omega$,
when the following condition holds: If $x_0\in\Omega$, $\phi\in
C^2(\Omega)$ and $u-\phi$ has a local minimum (maximum) at $x_0$,
then
$$ F( D\phi(x_0), D^2\phi(x_0))+f(u(x_0))\leq (\geq) 0. $$

If $u$ is  a viscosity supersolution (subsolution), we say that $u$
verifies
$$F(Du, D^2u)+f(u)\leq(\geq) 0$$
in the viscosity sense. We say that $u$ is a viscosity solution of
(\ref{def}) when it simultaneously is a viscosity subsolution and
supersolution.

We also present the notion of viscosity sub and supersolutions of
the fully nonlinear parabolic equation (see e.g. \cite{W})
\begin{equation}
\partial_t u-F(Du, D^2u)-f(u)=0 \quad \ \mbox{in} \ \Omega_T:= \Omega\times(0, T]. \label{def1}
\end{equation}

\emph{Definition:} A continuous function $u: \Omega_T \to \mathbb R$
is a viscosity supersolution (subsolution) of (\ref{def1}) in
$\Omega_T$, when the following condition holds: If $(x_0,
t_0)\in\Omega_T$, $\phi\in C^2(\Omega_T)$ and $u-\phi$ has a local
minimum (maximum) at $(x_0, t_0)$, then
$$\partial_t \phi(x_0,
t_0)- F( D\phi(x_0, t_0), D^2\phi(x_0, t_0))-f(u(x_0, t_0))\geq
(\leq) 0. $$ We say that $u$ is viscosity solution of (\ref{def1})
when it both is a viscosity subsolution and supersolution.

We state  a strong maximum principle and the Hopf lemma for
non-proper operators in fully nonlinear elliptic equations (see e.g.
\cite{BD}).
\begin{lemma}
Let $\Omega\subset \mathbb R^n$ be a smooth domain and let $b(x), c(x)
\in L^{\infty}(\Omega)$. Suppose $u\in C(\bar\Omega)$ is a viscosity
 solution of
\begin{equation}
\left \{ \begin{array}{lll}
\mathcal{M}_{\Lambda_1, \Lambda_2}^-(D^2 u) -b(x)|Du|+c(x)u \leq 0
\quad &\mbox{in} \ \Omega, \nonumber \\
u\geq 0    \quad &\mbox{in} \ \Omega.
  \end{array}
  \right.
\end{equation}
Then either $u\equiv 0$ in $\Omega$ or $u>0$ in $\Omega$. Moreover,
at any point $x_0\in \partial\Omega$ where $u(x_0)=0$, we have
\begin{equation}
 \liminf_{t \to 0}\frac{u(x_0+t \nu)-u(x_0)}{t}<0,
 \nonumber
\end{equation}
where $\nu\in \mathbb R^n\backslash \{0\}$ is such that $\nu\cdot
n(x_0)>0$ and $n(x_0)$ denotes the exterior normal to $\partial
\Omega$ at $x_0$. \label{hop}
\end{lemma}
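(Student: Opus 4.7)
My approach is the classical interior-barrier construction adapted to viscosity sub/supersolutions. The first step is a reduction: since $u\ge 0$ and $c\in L^{\infty}(\Omega)$, the hypothesis implies that $u$ is a viscosity supersolution of
$$ \mathcal{M}^{-}_{\Lambda_1,\Lambda_2}(D^2 u)-b(x)|Du|-Ku\le 0 \quad \mbox{in}\ \Omega, $$
with $K:=\|c^{-}\|_{L^{\infty}(\Omega)}$. Write $L_0[w]:=\mathcal{M}^{-}_{\Lambda_1,\Lambda_2}(D^2 w)-b(x)|Dw|-Kw$. Since $L_0$ is nonincreasing in $w$, it is proper, and the standard viscosity comparison principle applies between a smooth strict classical subsolution of $L_0[\cdot]=0$ and the continuous viscosity supersolution $u$.

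For the strong maximum principle I would argue by contradiction. If $u\not\equiv 0$ but $u$ vanishes somewhere in $\Omega$, I pick a ball $B_\rho(y)\subset\{u>0\}$ whose closure meets $\{u=0\}$ at an interior point $z\in\partial B_\rho(y)\cap\Omega$. On the annulus $A=B_\rho(y)\setminus \overline{B_{\rho/2}(y)}$ I use the radial barrier
$$ h(x)=e^{-\alpha|x-y|^2}-e^{-\alpha\rho^2}. $$
The Hessian $D^2h$ has one radial eigenvalue $(4\alpha^2|x-y|^2-2\alpha)e^{-\alpha|x-y|^2}$ and $n-1$ tangential eigenvalues $-2\alpha e^{-\alpha|x-y|^2}$; plugging into the Pucci operator and comparing with the lower-order terms shows that for $\alpha=\alpha(\rho,\Lambda_1,\Lambda_2,\|b\|_\infty,K)$ sufficiently large, $L_0[h]>0$ throughout $A$, and hence $\epsilon h$ is a smooth strict subsolution of $L_0[\cdot]=0$ for every $\epsilon>0$. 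Since $u>0$ on $\partial B_{\rho/2}(y)$ and $h=0$ on $\partial B_\rho(y)$, I may fix $\epsilon$ small so that $u\ge\epsilon h$ on $\partial A$; comparison then yields $u\ge\epsilon h$ in $A$. Because $h(z)=0=u(z)$ and $h<0$ outside $\overline{B_\rho(y)}$, the inequality $u\ge\epsilon h$ extends to a full open neighborhood $U$ of $z$ in $\Omega$, so $\epsilon h$ is a smooth test function touching $u$ from below at the interior point $z$. The viscosity supersolution definition at $z$ forces $L_0[\epsilon h](z)\le 0$, contradicting $L_0[h](z)>0$.

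For the Hopf statement I may assume $u>0$ in $\Omega$ by the first part. Given $x_0\in\partial\Omega$ with $u(x_0)=0$, I inscribe an interior ball $B_R(y)\subset\Omega$ tangent to $\partial\Omega$ at $x_0$ and repeat the barrier construction on $B_R(y)\setminus\overline{B_{R/2}(y)}$ to obtain $u\ge\epsilon h$ there, with equality at $x_0$. The gradient $Dh(x_0)=-2\alpha R\,e^{-\alpha R^2}\,n(x_0)$ points along the inward normal $-n(x_0)$, so for any $\nu\in\mathbb{R}^n\setminus\{0\}$ with $\nu\cdot n(x_0)>0$ the estimate $u(x_0+t\nu)\ge\epsilon h(x_0+t\nu)$ along the admissible side of $t=0$, combined with a first-order Taylor expansion of $h$, yields the strict negativity of $\liminf_{t\to 0}t^{-1}(u(x_0+t\nu)-u(x_0))$. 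The principal technical obstacle in both steps is the viscosity comparison for $L_0$; because the subsolution $\epsilon h$ is smooth, however, the comparison reduces to evaluating the viscosity supersolution definition of $u$ at the contact point, and no full doubling-of-variables argument is required once the eigenvalue bookkeeping for $D^2h$ and the choice of $\alpha$ are made explicit.
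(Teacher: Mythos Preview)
The paper does not supply its own proof of this lemma: it is stated as a known result with the citation ``(see e.g.\ \cite{BD})'' to Bardi and Da~Lio, and is then used as a black box throughout Sections~3--4. So there is no in-paper argument to compare against.

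Your proposal is the standard Hopf barrier construction, carried out in the viscosity setting, and it is essentially correct. A couple of small points worth tightening: you need $L_0[h]>0$ not only in the open annulus $A$ but at the contact point $z\in\partial B_\rho(y)$ (respectively $x_0$), since that is where you invoke the supersolution inequality; your eigenvalue computation shows this holds on $\overline{A}$ for $\alpha$ large, so just say so. Also, in the comparison step you should note that at the interior maximum $x^\ast$ of $\epsilon h-u$ one tests with $\phi=\epsilon h-M$, and the zero-order term produces $-Ku(x^\ast)=-K(\epsilon h(x^\ast)-M)\ge -K\epsilon h(x^\ast)$, which is exactly the properness of $L_0$ you need to close the contradiction; you allude to this but the dependence on the sign of $K\ge 0$ could be made explicit. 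Finally, the Hopf conclusion as stated in the lemma is vacuous when $u\equiv 0$, so your reduction to the case $u>0$ in $\Omega$ is the right reading. With these cosmetic clarifications the argument is complete and matches the classical approach one would find in the cited reference.
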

It is straightforward  to deduce the strong maximum principle for
proper operators in fully nonlinear elliptic  equations from the
Hopf Lemma.
\begin{lemma}
Let $\Omega \in \mathbb R^n$ be an open set and let $u\in C(\Omega)$ be a viscosity solution of
$$ \mathcal{M}_{\Lambda_1, \Lambda_2}^-(D^2 u) -b(x)|Du|+c(x)u \leq 0
$$
with $b(x), c(x)\in L^{\infty}(\Omega)$ and $c(x)\leq 0$. Suppose
that $u$ achieves a non-positive  minimum in $\Omega$. Then $u$ is a
constant. \label{str}
\end{lemma}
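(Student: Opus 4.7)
The plan is to reduce the problem to the nonnegative setting already handled by Lemma \ref{hop}. Let $m := \min_{\Omega} u \leq 0$, attained at some interior point $x_0 \in \Omega$, and set $v := u - m$. Then $v \geq 0$ on $\Omega$ and $v(x_0) = 0$. I would first verify that $v$ remains a viscosity subsolution of the same inequality: if $\phi \in C^2$ and $v - \phi$ has a local minimum at $y$, then $u - (\phi + m)$ has a local minimum at $y$, so by assumption
\[
\mathcal{M}^-_{\Lambda_1,\Lambda_2}(D^2\phi(y)) - b(y)|D\phi(y)| + c(y)u(y) \leq 0.
\]
Since $c(y) \leq 0$ and $m \leq 0$, we have $-c(y)m \leq 0$, whence
\[
\mathcal{M}^-_{\Lambda_1,\Lambda_2}(D^2\phi(y)) - b(y)|D\phi(y)| + c(y)v(y) = \mathcal{M}^-_{\Lambda_1,\Lambda_2}(D^2\phi(y)) - b(y)|D\phi(y)| + c(y)u(y) - c(y)m \leq 0,
\]
so $v$ satisfies the same differential inequality in the viscosity sense.

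Next I would apply the strong maximum principle portion of Lemma \ref{hop} locally. Pick any smooth bounded subdomain $U$ with $x_0 \in U$ and $\bar U \subset \Omega'$, where $\Omega'$ is the connected component of $\Omega$ containing $x_0$. On $U$ we have $v \in C(\bar U)$, $v \geq 0$, and $v$ satisfies the inequality in the viscosity sense, so Lemma \ref{hop} yields the dichotomy: either $v \equiv 0$ on $U$ or $v > 0$ on $U$. Because $v(x_0) = 0$ and $x_0 \in U$, the second alternative is ruled out, so $v \equiv 0$ on $U$, i.e.\ $u \equiv m$ on $U$.

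Finally, I would propagate this equality across $\Omega'$ by the standard open-closed argument: the set $Z := \{x \in \Omega' : u(x) = m\}$ is closed in $\Omega'$ by continuity of $u$, and the local argument above shows $Z$ is open in $\Omega'$. Since $x_0 \in Z$, connectedness of $\Omega'$ gives $Z = \Omega'$, hence $u \equiv m$ there. Under the convention that $\Omega$ is connected (else one works component-by-component, noting that only the component of $x_0$ is controlled), this concludes that $u$ is constant.

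The only substantive point to check is the invariance of the viscosity inequality under the vertical translation $u \mapsto u - m$, which is the paragraph above and relies crucially on the two sign hypotheses $c \leq 0$ and $m \leq 0$; without either, the extra term $-c(y)m$ would have the wrong sign. Everything else is a direct invocation of Lemma \ref{hop} together with a connectedness argument, so I do not anticipate any serious obstacle.
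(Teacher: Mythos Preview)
Your proposal is correct and is essentially the approach the paper has in mind: the paper does not prove Lemma~\ref{str} but simply asserts that it ``is straightforward to deduce\ldots from the Hopf Lemma'' (i.e., from Lemma~\ref{hop}), and your argument supplies precisely this deduction via the shift $v=u-m$ and an appeal to the dichotomy in Lemma~\ref{hop}. The key computation---that $c\le 0$ and $m\le 0$ ensure $-c(y)m\le 0$, so the viscosity inequality is preserved under the vertical translation---is exactly what is needed, and the open--closed propagation on the connected component is standard.
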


We shall make use of the following maximum principle which does not
depend on the sign of $c(x)$, but instead, on the measure of the
domain $\Omega$ (see e.g. \cite{DS}).

\begin{lemma}
Consider a bounded domain $\Omega$ and assume that $|c(x)|<m$ in
$\Omega$ and $\gamma\geq 0$. Let $u\in C(\bar\Omega)$ be a viscosity
solution of
\begin{equation}
\left \{ \begin{array}{lll} \mathcal{M}_{\Lambda_1, \Lambda_2}^-(D^2
u) -\gamma|Du|+c(x)u \leq 0
\quad &\mbox{in} \ \Omega, \nonumber \\
u\geq 0    \quad &\mbox{on} \ \partial\Omega.
  \end{array}
  \right.
\end{equation}
Then  there exists a constant
$\delta=\delta(\Lambda_1,\Lambda_2,\gamma, n, m, diam(\Omega))$ such
that   we have $u\geq 0$ in $\Omega$ provided $|\Omega|<\delta$.
\label{min}
\end{lemma}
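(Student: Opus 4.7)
The plan is to reduce the assertion to the Alexandrov--Bakelman--Pucci (ABP) estimate for viscosity solutions of Pucci-type inequalities with a drift term, applied to the negative part of $u$. Concretely, set $v := u^- = \max(-u,0)$ and $\Omega^- := \{x \in \Omega : u(x) < 0\} = \{v > 0\}$. By the boundary condition $u \geq 0$ on $\partial\Omega$ and the continuity of $u$, we have $v \in C(\bar\Omega)$ with $v = 0$ on $\partial\Omega^-$. Assume for contradiction that $\Omega^-$ is non-empty, so that $\sup_\Omega v > 0$.

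First I would show that $v$ is a viscosity subsolution of
\begin{equation*}
\mathcal{M}^+_{\Lambda_1,\Lambda_2}(D^2 v) + \gamma |Dv| + c(x) v \geq 0 \quad \text{in } \Omega^-.
\end{equation*}
This is a routine computation using the identity $\mathcal{M}^-_{\Lambda_1,\Lambda_2}(D^2(-w)) = -\mathcal{M}^+_{\Lambda_1,\Lambda_2}(D^2 w)$ and inverting test functions: if $\psi$ touches $v$ from above at $x_0 \in \Omega^-$, then $\phi := -\psi$ touches $u$ from below at $x_0$, so the viscosity inequality satisfied by $u$ translates into the displayed inequality for $v$, noting that $u(x_0) = -v(x_0)$ in $\Omega^-$.

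Next I would apply the ABP estimate for Pucci operators with first-order drift (see Caffarelli--Cabr\'e \cite{CC} or Koike--Swiech, the version valid for viscosity solutions): since $v \geq 0$ in $\Omega^-$ and $v = 0$ on $\partial\Omega^-$,
\begin{equation*}
\sup_{\Omega^-} v \;\leq\; C\bigl(n, \Lambda_1, \Lambda_2, \gamma, \mathrm{diam}(\Omega)\bigr) \, \|c(x)\, v\|_{L^n(\Omega^-)}.
\end{equation*}
Using $|c(x)| < m$ together with the crude bound $\|cv\|_{L^n(\Omega^-)} \leq m \cdot (\sup_{\Omega^-} v) \cdot |\Omega^-|^{1/n}$, this becomes
\begin{equation*}
\sup_{\Omega^-} v \;\leq\; C\, m\, |\Omega^-|^{1/n}\, \sup_{\Omega^-} v.
\end{equation*}
Choosing $\delta = \delta(\Lambda_1,\Lambda_2,\gamma, n, m, \mathrm{diam}(\Omega)) > 0$ so that $C m \,\delta^{1/n} < \tfrac{1}{2}$ forces $\sup_{\Omega^-} v = 0$ whenever $|\Omega| < \delta$, contradicting the assumption that $\Omega^-$ is non-empty.

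The main technical point is the ABP estimate with the $\gamma |Du|$ drift for viscosity solutions: the exponential-weight trick of Cabr\'e (multiplying by $e^{\alpha x_1}$) or a direct convex-envelope argument is needed to absorb the first-order term, and one must verify that the contact-set arguments still go through in the viscosity framework. Once this estimate is invoked as a black box, the small-domain argument above is essentially immediate.
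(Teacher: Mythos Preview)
Your argument is correct and is in fact the standard route to this small-domain maximum principle. Note, however, that the paper does not give its own proof of this lemma: it is quoted as a known result with a reference to Da~Lio and Sirakov \cite{DS} (and ultimately to the ABP-based argument of Berestycki--Nirenberg \cite{BN}). What you have written is precisely the proof one finds there: pass to $v=u^-$ on the set $\Omega^-=\{u<0\}$, observe that $v$ is a viscosity subsolution of $\mathcal{M}^+_{\Lambda_1,\Lambda_2}(D^2v)+\gamma|Dv|\ge -c(x)v$ with zero boundary data, apply the ABP estimate for viscosity solutions (available e.g.\ in \cite{CC} or \cite{CCKS}) to obtain
\[
\sup_{\Omega^-}v \;\le\; C(n,\Lambda_1,\Lambda_2,\gamma,\mathrm{diam}(\Omega))\,\|c v\|_{L^n(\Omega^-)}\;\le\; C\,m\,|\Omega^-|^{1/n}\sup_{\Omega^-}v,
\]
and absorb the right-hand side once $|\Omega|$ is small. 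Your caveat about the drift term is well placed but not a genuine obstacle: the ABP inequality with a bounded first-order coefficient is established for viscosity solutions in \cite{CCKS}, so you may invoke it as a black box.
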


The following result is concerned about the regularity of viscosity
solutions in \cite{CC}.
\begin{lemma}
Let $\Omega\subset \mathbb R^n$  be a bounded domain and assume that
$(F1)$ is satisfied and $f$ is locally Lipschitz. Let $u\in
C(\bar\Omega)$ be a viscosity solution of
$$ F(Du, D^2u)+f(u)=0 \quad \mbox{in} \ \Omega.  $$
Then $u$ is in $C^{1, \alpha}_{loc}(\Omega)$ for some $\alpha\in (0, 1)$.
\label{regu}
\end{lemma}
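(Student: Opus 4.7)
The plan is to reduce the equation to a pair of Pucci-type differential inequalities with bounded right-hand side and a controlled first-order term, and then quote the interior $C^{1,\alpha}$ regularity theory developed by Caffarelli and Cabré (and its extensions by \'Swi\k{e}ch, Trudinger, Caffarelli--Crandall--Kocan--\'Swi\k{e}ch) which apply to exactly this class of inequalities.

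First I would observe that because $u \in C(\bar\Omega)$, the image $u(\bar\Omega)$ is a compact subset of $\mathbb R$, so by the local Lipschitz hypothesis on $f$ the composition $f(u)$ is bounded on $\Omega$ by some constant $M$. Next, applying the structure hypothesis (F1) with $\xi_2 = 0$ and $N = 0$, one obtains the viscosity-sense inequalities
\begin{equation*}
\mathcal{M}^-_{\Lambda_1,\Lambda_2}(D^2 u) - \gamma |Du| \;\le\; -f(u) - F(0,0) \;\le\; \mathcal{M}^+_{\Lambda_1,\Lambda_2}(D^2 u) + \gamma |Du|,
\end{equation*}
so $u$ is simultaneously a viscosity subsolution of $\mathcal{M}^+_{\Lambda_1,\Lambda_2}(D^2 u) + \gamma|Du| \ge -\|f(u) + F(0,0)\|_\infty$ and a supersolution of the analogous $\mathcal{M}^-$ inequality. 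The right-hand side in both inequalities is bounded by $M + |F(0,0)|$, so from here the problem is entirely of the form ``$u$ solves Pucci inequalities with bounded gradient term and bounded right-hand side.''

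At this point I would invoke the standard interior $C^{1,\alpha}$ regularity theorem for viscosity solutions of uniformly elliptic equations with a Lipschitz first-order term and bounded right-hand side (Caffarelli--Cabré, Theorem 8.1 in \cite{CC}, with the gradient term treated as in Caffarelli--Crandall--Kocan--\'Swi\k{e}ch or \'Swi\k{e}ch's $W^{2,p}$ paper): on any compactly contained subdomain $\Omega' \Subset \Omega$, one has $u \in C^{1,\alpha}(\Omega')$ with a quantitative estimate in terms of $\|u\|_{L^\infty(\Omega)}$, $\Lambda_1,\Lambda_2,\gamma,n$, and $M$. Since $\Omega'$ was arbitrary this gives $u \in C^{1,\alpha}_{\mathrm{loc}}(\Omega)$.

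The only real obstacle is that the classical Caffarelli--Cabré $C^{1,\alpha}$ theory is typically stated without a first-order term, so one has to use the extension that permits a $\gamma|Du|$ contribution. This is routine but worth citing carefully: one may either absorb the first-order term by passing to a suitable rescaling and applying the Krylov--Safonov Harnack inequality (to first obtain $C^\alpha$ regularity of $u$, hence of $Du$-free lower order perturbations), and then iterate to $C^{1,\alpha}$ via the tangent plane / affine approximation argument of Caffarelli, or directly quote \'Swi\k{e}ch's version of these estimates in which first-order dependence is permitted. Everything else in the proof is bookkeeping.
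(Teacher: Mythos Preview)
The paper does not actually prove this lemma; it simply states it as a known regularity result and cites \cite{CC} (Cabr\'e--Caffarelli). Your proposal is a correct sketch of how one extracts the result from that reference and its extensions: reduce via (F1) to two-sided Pucci inequalities with bounded right-hand side and a $\gamma|Du|$ term, then invoke the interior $C^{1,\alpha}$ estimates for such equations. That is exactly the content behind the citation, so your approach and the paper's are the same at the level of ``cite Caffarelli--Cabr\'e,'' with the only difference being that you spell out the reduction step while the paper omits it. Your caveat about needing the version of the theory that accommodates the first-order term (via \cite{CCKS} or \'Swi\k{e}ch) is well taken and is indeed the only point requiring care.
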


In the process of employing the moving plane method, we need to
compare $u$ at $x$ with its value at its reflection point of $x$.
The next lemma shows that the difference of a supersolution and a
subsolution of the fully nonlinear equation is still a
supersolution. Unlike the case of the classical solutions of fully
nonlinear equations $F(x, u, Du, D^2u)=0$ with differentiable
components, the difficulty here is the lack of regularity of $u$.
The following result is first showed in \cite{DS}. We also refer the
reader to \cite{CMS} and  \cite{MQ} for related results. In Section
4, we will derive similar results for viscosity solutions of fully
nonlinear parabolic equations.

\begin{lemma}
Assume that $F(Du, D^2u)$ satisfies $(F1)$ and $f$ is locally
Lipschitz. Let $u_1\in C(\bar\Omega)$ and $u_2\in C(\bar\Omega)$ be
respectively a viscosity subsolution and supersolution of
$$ F(Du, D^2u)+f(u)=0 \quad \quad \mbox{in} \ \Omega. $$
Then the function $v=u_2-u_1$ is a viscosity solution of
$$\mathcal{M}^{-}_{\Lambda_1, \Lambda_2}(D^2
v)-\gamma |Dv|+ c(x)v(x)\leq 0,$$ where
\begin{equation}
 c(x)= \left\{ \begin{array}{lll}
\frac{f(u_2(x))-f(u_1(x))}{u_2(x)-u_1(x)},  \quad \ &\mbox{if} \
u_2(x)\not= u_1(x), \\
\\
0, \quad &\mbox{otherwise}.
\end{array}
\right.
\end{equation}
\label{diff}
\end{lemma}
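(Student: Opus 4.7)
My plan is to verify the supersolution inequality for $v=u_2-u_1$ by a doubling-of-variables argument that exploits the structure hypothesis $(F1)$ to cancel the diverging penalty. Fix $\phi\in C^2(\Omega)$ and a point $x_0\in\Omega$ at which $v-\phi$ attains a local minimum; after replacing $\phi$ by $\phi-|x-x_0|^4$ I may assume the minimum is strict and interior without altering $D\phi(x_0)$ or $D^2\phi(x_0)$. For small $\epsilon>0$ I introduce
$$\Phi_\epsilon(x,y)=u_1(x)-u_2(y)+\phi(x)-\frac{|x-y|^2}{\epsilon^2}$$
on a compact product box $\overline{B_r(x_0)}\times\overline{B_r(x_0)}$ and pick a maximum point $(x_\epsilon,y_\epsilon)$. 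Comparing $\Phi_\epsilon(x_\epsilon,y_\epsilon)$ with $\Phi_\epsilon(x_0,x_0)$, the standard penalty argument yields $x_\epsilon,y_\epsilon\to x_0$, $|x_\epsilon-y_\epsilon|^2/\epsilon^2\to 0$, and places $(x_\epsilon,y_\epsilon)$ in the interior of the box for all small $\epsilon$. The asymmetric splitting (the smooth $\phi$ appearing only alongside $u_1$) is deliberate: it allows me to freeze one variable at a time and invoke the one-variable viscosity definition directly, bypassing the Crandall--Ishii lemma.

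Freezing $y=y_\epsilon$, the function $u_1-\eta_1$ with $\eta_1(x)=-\phi(x)+|x-y_\epsilon|^2/\epsilon^2$ attains a local maximum at $x_\epsilon$, so the subsolution property of $u_1$ gives
$$F\bigl(-D\phi(x_\epsilon)+2(x_\epsilon-y_\epsilon)/\epsilon^2,\;-D^2\phi(x_\epsilon)+2\epsilon^{-2}I\bigr)+f(u_1(x_\epsilon))\geq 0.$$
Freezing $x=x_\epsilon$, the function $u_2-\eta_2$ with $\eta_2(y)=-|x_\epsilon-y|^2/\epsilon^2$ attains a local minimum at $y_\epsilon$, so the supersolution property of $u_2$ gives
$$F\bigl(2(x_\epsilon-y_\epsilon)/\epsilon^2,\;2\epsilon^{-2}I\bigr)+f(u_2(y_\epsilon))\leq 0.$$
Subtracting and applying the upper bound in $(F1)$ with $N=2\epsilon^{-2}I$ and $M=-D^2\phi(x_\epsilon)$, the divergent $2\epsilon^{-2}I$ occupies the $N$-slot on both sides and cancels, leaving the bounded Pucci/gradient increment:
$$\mathcal{M}^-_{\Lambda_1,\Lambda_2}(D^2\phi(x_\epsilon))-\gamma|D\phi(x_\epsilon)|+f(u_2(y_\epsilon))-f(u_1(x_\epsilon))\leq 0.$$

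Finally I pass to the limit $\epsilon\to 0$. Continuity of $u_1,u_2,\phi$ and local Lipschitzness of $f$ force $f(u_2(y_\epsilon))-f(u_1(x_\epsilon))\to f(u_2(x_0))-f(u_1(x_0))$, which equals $c(x_0)v(x_0)$ both when $u_2(x_0)\neq u_1(x_0)$ (by definition of the difference quotient) and when $u_2(x_0)=u_1(x_0)$ (where both the difference and $c(x_0)v(x_0)$ vanish under the convention $c(x_0)=0$). This yields the desired supersolution inequality at $x_0$. The main obstacle to close the argument is the apparent blow-up of $2\epsilon^{-2}I$ in the matrix argument of $F$; hypothesis $(F1)$ is exactly what tames this divergence, because the asymmetric doubling routes the divergent part into the common $N$-slot of both inequalities, so that only the bounded difference $M=-D^2\phi$ ever reaches the Pucci operator.
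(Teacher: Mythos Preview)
Your argument contains a sign error that is fatal, not cosmetic. With $\eta_2(y)=-|x_\epsilon-y|^2/\epsilon^2$ one has $D^2\eta_2(y_\epsilon)=-2\epsilon^{-2}I$, not $+2\epsilon^{-2}I$. Hence the supersolution inequality for $u_2$ reads
\[
F\bigl(2(x_\epsilon-y_\epsilon)/\epsilon^2,\ -2\epsilon^{-2}I\bigr)+f(u_2(y_\epsilon))\leq 0,
\]
while the subsolution inequality for $u_1$ involves the Hessian $-D^2\phi(x_\epsilon)+2\epsilon^{-2}I$. The divergent penalties therefore enter the two inequalities with \emph{opposite} signs, and when you subtract and apply $(F1)$ the matrix increment is $M=-D^2\phi(x_\epsilon)+4\epsilon^{-2}I$, giving
\[
\mathcal{M}^+_{\Lambda_1,\Lambda_2}\bigl(-D^2\phi(x_\epsilon)+4\epsilon^{-2}I\bigr)+\gamma|D\phi(x_\epsilon)|\geq f(u_2(y_\epsilon))-f(u_1(x_\epsilon)),
\]
which blows up as $\epsilon\to 0$ and yields nothing. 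The claimed ``common $N$-slot'' cancellation simply does not occur with one-variable freezing; this is precisely the obstruction that forces the use of the Crandall--Ishii maximum principle (theorem on sums).

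The paper, following \cite{DS} (and carrying out the parabolic analogue explicitly in Lemma~\ref{ppp}), invokes Theorem~8.3 of \cite{CIL} at the joint maximum of the doubled functional to produce matrices $X,Y\in S_n(\mathbb{R})$ in the semijets of $u_1$ and $-u_2$ satisfying the two-sided matrix inequality; that inequality is what forces $X+Y$ (and hence the relevant Pucci expression) to be bounded by $D^2\phi$ plus an $O(\epsilon)$ error, after which one can pass to the limit. Your proof can be repaired by replacing the naive freezing step with this lemma; the rest of your outline (strict minimum, penalty convergence, identification of $c(x_0)v(x_0)$) is fine.
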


In the proof of Lemma \ref{diff} in \cite{DS}, an equivalent
definition of viscosity solutions in terms of semijets is used (see
\cite{CIL}). In order  to obtain the parabolic version of Lemma
\ref{diff},  we denote by $\mathcal {P}_{\Omega}^{2, +}, \mathcal
{P}_{\Omega}^{2, -} $ the parabolic semijets.

\emph{Definition:}
\begin{equation}
\begin{array}{lll}
\mathcal {P}_{\Omega}^{2, +}u(z, s)=&\{(a, p, X)\in \mathbb R\times
\mathbb R^n \times S_n(\mathbb R): u(x, t)\leq u(z, s)+a(t-s)+<p,
x-z>+ \\
\\{}&\frac{1}{2}<X(x-z), x-z> +o(|t-s|+|x-z|^2) \ \mbox{as} \
\Omega_T\ni(x, t)\to (z,s)\}.
\end{array}
\end{equation}
 While we define ${P}_{\Omega}^{2, -}(u):=-{P}_{\Omega}^{2, +}(-u)$.

\section{Symmetry of Viscosity Solutions in  $\mathbb R^n$}

In this section, we will obtain the radial symmetry of nonnegative
solution in (\ref{equ1}). We first present a technical lemma about
the eigenvalue of a radial function. It could be verified by a direct
calculation.
\begin{lemma}
Let $\psi: (0,\ +\infty)\to \mathbb R$ be a $C^2$ radial function.
For $\forall \ x\in \mathbb R^n\backslash\{0\}$, the eigenvalues of
 $D^2 \psi(|x|)$ are $\psi{''}(|x|)$, which is simple and
$\frac{\psi'(|x|)}{|x|}$, which has multiplicity $(n-1)$.
\end{lemma}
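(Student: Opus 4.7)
The plan is to carry out the Hessian computation directly from the chain rule and then read off the eigenvalues from the resulting rank-one-plus-multiple-of-identity structure.

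First I would set $r=|x|$ and differentiate $\psi(r)$ as a function on $\mathbb{R}^n\setminus\{0\}$. Since $\partial_i r = x_i/r$, the first derivatives are $\partial_i\psi=\psi'(r)\,x_i/r$, and one more differentiation (using $\partial_j(x_i/r)=\delta_{ij}/r - x_ix_j/r^3$) gives
\begin{equation*}
\partial_{ij}\psi(|x|) \;=\; \frac{\psi'(r)}{r}\,\delta_{ij} \;+\; \Bigl(\psi''(r)-\frac{\psi'(r)}{r}\Bigr)\,\frac{x_ix_j}{r^2}.
\end{equation*}

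Next I would package this as the matrix identity
\begin{equation*}
D^2\psi(|x|) \;=\; \frac{\psi'(r)}{r}\,I \;+\; \Bigl(\psi''(r)-\frac{\psi'(r)}{r}\Bigr)\,P_x,
\end{equation*}
where $P_x = (x\otimes x)/|x|^2$ is the orthogonal projection onto the radial line $\mathbb{R}x$. The spectrum of $P_x$ is immediate: eigenvalue $1$ (simple) on the radial direction $x/|x|$, and eigenvalue $0$ with multiplicity $n-1$ on the orthogonal complement $x^\perp$.

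Finally I would read off the eigenvalues of $D^2\psi(|x|)$ from this decomposition: on $x/|x|$ the operator acts by $\psi'(r)/r + (\psi''(r)-\psi'(r)/r)=\psi''(r)$, giving the simple eigenvalue $\psi''(|x|)$; on $x^\perp$ the second term drops out and the operator acts by $\psi'(r)/r$, giving the eigenvalue $\psi'(|x|)/|x|$ of multiplicity $n-1$. There is no real obstacle here — the only thing to be careful about is verifying the two coefficients in the Hessian and keeping track that the projection contributes only in the radial direction; the rest is routine linear algebra.
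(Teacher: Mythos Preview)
Your proposal is correct and is precisely the direct calculation the paper alludes to (the paper gives no proof beyond the remark that the lemma ``could be verified by a direct calculation''). One small caveat worth noting, though it concerns the lemma's phrasing rather than your argument: the eigenvalue $\psi''(|x|)$ is simple only when $\psi''(r)\neq \psi'(r)/r$; at points where these coincide the Hessian is a scalar multiple of the identity, but your decomposition handles that case as well.
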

Based on the above conclusion, we may select specific functions. For
instance, let $\psi=|x|^{-q}$ and  $0<q<n^\ast-2$. Recall that
$n^\ast=\frac{\Lambda_1}{\Lambda_2}(n-1)+1$.  The eigenvalues are
$q(q+1)|x|^{-q-2}$ and $-q|x|^{-q-2}$. From the above lemma, for
$x\in \mathbb R^n\backslash\{0\}$,
\begin{equation}
\begin{array}{lll}
\mathcal{M}_{\Lambda_1, \Lambda_2}^+(D^2\psi)(x)&=&\Lambda_2
q(q+1)|x|^{-q-2}-\Lambda_1(n-1)q|x|^{-q-2} \nonumber\\
\\
&=&\frac{q(\Lambda_2(q+1)-\Lambda_1(n-1))}{|x|^2}\psi(x).
\label{spe}
\end{array}
\end{equation}
Notice that  $0<q<n^\ast-2$ implies that
$$ q(\Lambda_2(q+1)-\Lambda_1(n-1))<0.$$

We shall make use of a simple lemma, which enables us to consider the
product of a viscosity solution and an auxiliary function. The argument
is in the spirit of Lemma 2.1 in \cite{DS}. However, the idea behind
it is different. In their lemma, $u(x)$ is assumed to be
nonnegative. We do not impose this assumption. In  other words,
we specifically focus on the points where $u(x)$ is negative.

\begin{lemma}
Let $u\in C(\Omega)$ satisfy
\begin{equation}
\mathcal{M}_{\Lambda_1, \Lambda_2}^-(D^2 u)-b(x)|Du|+c(x)u\leq 0
\end{equation}
where $b(x), c(x)\in L^{\infty}(\Omega)$. Suppose $\psi\in
C^2(\Omega)\cap C^1(\bar\Omega)$ is strictly positive in
$\bar\Omega$. Assume $u(x_0)<0$. Then  $ \bar u:=u/{\psi}$ satisfies
\begin{equation}
\mathcal{M}_{\Lambda_1, \Lambda_2}^-(D^2\bar u)- \bar b (x)|D\bar
u|+\bar c(x)\bar u\leq 0
\end{equation}
at $x_0$, where $$\bar b(x)=\frac{2\sqrt{n}\Lambda_2
|D\psi|}{\psi}+|b|$$ and $$\bar
c(x)=c(x)+\frac{\mathcal{M}_{\Lambda_1, \Lambda_2}^+(D^2
\psi)+|b||D\psi|}{\psi}.$$ \label{tec}
\end{lemma}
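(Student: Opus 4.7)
The plan is to check the claimed inequality for $\bar u$ at $x_0$ in the viscosity sense by engineering a test function for $u$. Pick any $\phi\in C^2$ such that $\bar u - \phi$ has a local minimum at $x_0$; after subtracting a constant we may assume $\phi(x_0)=\bar u(x_0)=u(x_0)/\psi(x_0)<0$. Put $\Phi:=\psi\phi\in C^2$. Since $\psi>0$, the factorization $u-\Phi=\psi(\bar u-\phi)$ shows that $u-\Phi$ also attains a local minimum at $x_0$, so the viscosity inequality satisfied by $u$ gives
\[
\mathcal{M}^-_{\Lambda_1,\Lambda_2}(D^2\Phi(x_0))-b(x_0)|D\Phi(x_0)|+c(x_0)u(x_0)\leq 0.
\]

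Next I would expand the derivatives of $\Phi$: $D\Phi=\psi D\phi+\phi D\psi$ and $D^2\Phi=\psi D^2\phi+A+\phi D^2\psi$ with $A:=D\phi\otimes D\psi+D\psi\otimes D\phi$. Three standard features of the Pucci operator come into play at $x_0$: positive homogeneity $\mathcal{M}^-(\psi M)=\psi\mathcal{M}^-(M)$ since $\psi>0$, the sign flip $\mathcal{M}^-(\lambda M)=\lambda\mathcal{M}^+(M)$ for $\lambda<0$ applied with $\lambda=\phi(x_0)=\bar u(x_0)<0$, and superadditivity. Together they yield
\[
\mathcal{M}^-_{\Lambda_1,\Lambda_2}(D^2\Phi)\geq \psi\,\mathcal{M}^-_{\Lambda_1,\Lambda_2}(D^2\phi)+\mathcal{M}^-_{\Lambda_1,\Lambda_2}(A)+\bar u\,\mathcal{M}^+_{\Lambda_1,\Lambda_2}(D^2\psi).
\]
For the cross term I would compute $\|A\|_F^2=\operatorname{tr}(A^2)=2(D\phi\cdot D\psi)^2+2|D\phi|^2|D\psi|^2\leq 4|D\phi|^2|D\psi|^2$ and bound $\sum_i|e_i(A)|\leq\sqrt{n}\,\|A\|_F\leq 2\sqrt{n}\,|D\phi||D\psi|$, so that $\mathcal{M}^-_{\Lambda_1,\Lambda_2}(A)\geq-2\sqrt{n}\,\Lambda_2|D\phi||D\psi|$.

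For the first-order piece, $b|D\Phi|\leq |b|\,|D\Phi|\leq |b|\psi|D\phi|+|b||D\psi||\bar u(x_0)|$. I would then invoke the hypothesis $u(x_0)<0$ a second time, as $|\bar u(x_0)|=-\bar u(x_0)$, to rewrite $|b||D\psi||\bar u(x_0)|=-|b||D\psi|\bar u(x_0)$, which converts that term into a contribution to the zero-order coefficient on $\bar u$. Substituting all the bounds back into the viscosity inequality for $\Phi$ and dividing by $\psi(x_0)>0$ collects the $|D\phi|$ and $\bar u$ pieces into exactly
\[
\mathcal{M}^-_{\Lambda_1,\Lambda_2}(D^2\phi(x_0))-\bar b(x_0)|D\phi(x_0)|+\bar c(x_0)\bar u(x_0)\leq 0
\]
with the stated $\bar b$ and $\bar c$. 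The one real delicacy is using $\bar u(x_0)<0$ consistently in two places: it is what swaps $\mathcal{M}^-$ for $\mathcal{M}^+$ on the $\phi D^2\psi$ piece and what absorbs $|b||D\psi||\bar u|$ into $\bar c$ with the correct sign, rather than leaving it as a spurious forcing term. The eigenvalue bound on $A$ is the only piece of linear algebra needed, and it is short.
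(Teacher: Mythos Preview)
Your argument is correct and follows essentially the same route as the paper: you pick a $C^2$ test function $\phi$ touching $\bar u$ from below at $x_0$, use positivity of $\psi$ to see that $\psi\phi$ touches $u$ from below, apply the viscosity inequality to $\psi\phi$, and then expand using superadditivity of $\mathcal{M}^-$, the sign flip $\mathcal{M}^-(\phi\,D^2\psi)=\phi\,\mathcal{M}^+(D^2\psi)$ for $\phi(x_0)<0$, and a Frobenius-norm bound on the cross term to arrive at exactly $\bar b$ and $\bar c$. The only cosmetic difference is that the paper bounds the cross term via $\operatorname{tr}(A\,(p\otimes q))\leq |A|\,|p\otimes q|\leq \sqrt{n}\,\Lambda_2|p||q|$ (with their symmetric $\otimes$) rather than your eigenvalue estimate $\sum_i|e_i|\leq\sqrt n\,\|A\|_F$, but the resulting constant $2\sqrt n\,\Lambda_2$ is the same.
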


\begin{proof}
Let $\phi(x)\in C^2(\Omega)$ be the test function that toughes $\bar
u$ from below at $x_0$, that is $\phi(x_0)=\bar u(x_0)$ and $\bar
u(x)\geq \phi(x)$ in $\Omega$. Then $u(x_0)=\phi(x_0)\psi(x_0)$ and
$u(x)\geq \phi(x)\psi(x)$ in $\Omega$, which indicates that
$\phi(x)\psi(x)$ toughes $u$ from below. Simple  calculations show
that
$$ D(\phi\psi)=D\phi\cdot \psi+ D\psi\cdot \phi,$$
$$ D^2(\phi\psi)=\phi D^2\psi+2D\phi\otimes D\psi+D^2\phi\psi,$$
where $\otimes$ denotes the symmetric tensor product with $p\otimes
q=\frac{1}{2}(p_i q_j+p_jq_i)_{i,j}$. By the properties of the Pucci
extremal operators, we have
$$\mathcal{M}_{\Lambda_1, \Lambda_2}^-(M+N)\geq
\mathcal{M}_{\Lambda_1, \Lambda_2}^-(M)+\mathcal{M}_{\Lambda_1,
\Lambda_2}^-(N),$$ $$ \mathcal{M}_{\Lambda_1,
\Lambda_2}^-(aM)=a\mathcal{M}_{\Lambda_1, \Lambda_2}^+(M)$$ for
$a\leq 0$. We also note that
$$tr(A(p\otimes q))\leq |A||p\otimes q|\leq \sqrt{n}\Lambda_2|p||q|,$$
where $A$ is a matrix whose eigenvalues lie in $[\Lambda_1,
\Lambda_2]$ and $|A|:=\sqrt{tr(A^T A)}$. Since $\phi\psi$ is a test
function for $u$ and $\phi(x_0)=\bar u(x_0)<0$, taking into account
the above properties, we get
\begin{equation}
\begin{array}{lll}
0&\geq& c(x)\phi\psi-b|D(\phi\psi)|+\mathcal{M}_{\Lambda_1,
\Lambda_2}^-(D^2( \psi\phi))
\nonumber \\
\\
&\geq &
c(x)\phi\psi+|b||D\psi|\phi-|b||D\phi|\psi+\psi\mathcal{M}_{\Lambda_1,
\Lambda_2}^-(D^2\phi)-2\sqrt{n}\Lambda_2|D\phi||D\psi| \\
\\&&+\phi\mathcal{M}_{\Lambda_1,
\Lambda_2}^+(D^2\psi) \nonumber \\
\\
&\geq&(c(x)\psi+\mathcal{M}_{\Lambda_1,
\Lambda_2}^+(D^2\psi)+|b||D\psi|)\phi-(2\sqrt{n}\Lambda_2|D\psi|+|b|\psi)|D\phi|+\psi
\mathcal{M}_{\Lambda_1, \Lambda_2}^-(D^2\phi)
\end{array}
\end{equation}
at $x_0$. Dividing both sides by $\psi$, we obtain
$$
\mathcal{M}_{\Lambda_1, \Lambda_2}^-(D^2\phi)(x_0)- \bar b
(x_0)|D\phi|(x_0)+\bar c(x_0)\phi(x_0)\leq 0, $$ where $\bar b(x),
\bar c(x)$ are in the statement of the lemma.

\end{proof}

 Using the above lemma and the strong maximum principle in Lemma
 \ref{str}, we are able to consider the maximum principle in terms of $c(x)$  for
 non-proper operators in fully
 nonlinear elliptic equation.

\begin{proof}[Proof of Theorem \ref{max}]
We prove it by contradiction argument. Suppose that $u(x)<0$
somewhere in $\Omega$. Let $$\bar u(x)=\frac{u(x)}{\psi(x)}.$$ Then
$\bar u(x)<0$ somewhere in $\Omega$. Since $u(x)\geq 0$ on $\partial
\Omega$, we may assume that $\bar u(x^\ast)=\inf_{\Omega} \bar
u(x)<0$, where $x^\ast\in \Omega$. By the continuity of $\bar u(x)$,
we can find a connected neighborhood $\Omega'$ containing $ x^\ast$
such that $\bar u(x)<0$ in $\Omega'$ and $ \bar u(x)\not \equiv
u(x^\ast)$ in $\Omega'$. Otherwise, $u(x)\equiv u(x^\ast)$ in
$\Omega$, it is obviously a contradiction. Thanks to Lemma \ref{tec}
with $b(x)$ replaced by $\gamma$, $\bar u$ satisfies
\begin{equation}
\mathcal{M}_{\Lambda_1, \Lambda_2}^-(D^2\bar u)- \bar b (x)|D\bar
u|+\bar c(x)\bar u\leq 0 \quad \mbox{in} \ \Omega'.
\end{equation}
Recall that $$\bar b(x)=\frac{2\sqrt{n}\Lambda_2
|D\psi|}{\psi}+\gamma\in L^{\infty}(\Omega')$$ and $$\bar
c(x)=c(x)+\frac{\mathcal{M}_{\Lambda_1, \Lambda_2}^+(D^2
\psi)+\gamma|D\psi|}{\psi}\in L^{\infty}(\Omega').$$ By the
assumptions (\ref{est}) and (\ref{aux}),
$$ c(x)+\frac{\mathcal{M}_{\Lambda_1, \Lambda_2}^+(D^2
\psi)+\gamma|D\psi|}{\psi}\leq 0.$$ Thanks to the strong maximum
principle in Lemma \ref{str}, $\bar u(x)\equiv \bar u(x^\ast)$ in
$\Omega'$. It contradicts our assumption. This contradiction leads
to the proof of the lemma.
\end{proof}
\begin{remark}

1. From the proof, we can see that the same reasoning follows when
the condition (\ref{aux}) and (\ref{est}) hold where $u$ is
negative.

2. If $c(x), \lambda(x)$ are continuous, we only need
$c(x^\ast)<\lambda(x^\ast)-\gamma |D\psi|/\psi (x^\ast)$, where
$x^\ast$ is the point where $u$ reaches minimum. \label{rem1}
\end{remark}

In the spirit of the above argument, we extend the corresponding
maximum principle to unbounded domains. We need to guarantee that the
minimum is only achieved in the interior of the domain.

\begin{lemma}
Let $\Omega$ be an unbounded domain. If $u, \psi$ satisfy the same
conditions as that in Theorem \ref{max} and assume that
\begin{equation}
\liminf_{|x|\to\infty}\frac{u(x)}{\psi(x)}\geq 0 \label{inf},
\end{equation}
then $u\geq 0$ in $\Omega$. \label{con}
\end{lemma}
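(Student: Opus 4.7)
The plan is to argue by contradiction and, after using hypothesis (\ref{inf}) to localize, to apply the scheme of the proof of Theorem \ref{max} verbatim. The main obstacle, and the only place where the unboundedness of $\Omega$ enters, is that the infimum of $\bar u := u/\psi$ need not be attained on an unbounded set; the hypothesis (\ref{inf}) is precisely what circumvents this difficulty by forcing $\bar u$ to be non-negative asymptotically.

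Concretely, I would suppose for contradiction that $\bar u(x_0) < 0$ for some $x_0 \in \Omega$ and set $m := \bar u(x_0) < 0$. By (\ref{inf}), there exists $R > |x_0|$ such that $\bar u(x) > m/2$ for every $x \in \Omega$ with $|x| \geq R$. On the bounded closed set $\overline{\Omega \cap B_R}$, the continuous function $\bar u$ attains its infimum at some point $x^*$ with $\bar u(x^*) \leq m < 0$. Since $\bar u \geq 0$ on $\partial \Omega$ and $\bar u > m/2 > m$ on $\Omega \cap \partial B_R$, the point $x^*$ must lie in the open set $\Omega \cap B_R$. In particular, $x^*$ is an interior local minimum of $\bar u$ in $\Omega$ at which $\bar u(x^*) < 0$, which is the same situation from which the argument of Theorem \ref{max} begins.

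From this point, the argument of Theorem \ref{max} applies verbatim. I would select a connected open neighborhood $\Omega' \subset \Omega$ of $x^*$ on which $\bar u < 0$ and $\bar u \not\equiv \bar u(x^*)$; if no such $\Omega'$ existed, then by the open--closed connectedness argument applied to $\{x \in \Omega : \bar u(x) = \bar u(x^*)\}$, one would have $\bar u \equiv \bar u(x^*) < 0$ throughout the connected component of $\Omega$ containing $x^*$, contradicting either $\bar u \geq 0$ on $\partial \Omega$ or the asymptotic hypothesis (\ref{inf}). Then Lemma \ref{tec} with $b \equiv \gamma$, combined with (\ref{aux}) and (\ref{est}), yields $\bar c(x) \leq 0$ and the viscosity inequality
\begin{equation*}
\mathcal{M}^-_{\Lambda_1, \Lambda_2}(D^2 \bar u) - \bar b(x)|D\bar u| + \bar c(x)\bar u \leq 0 \quad \text{in } \Omega'.
\end{equation*}
Since $\bar u$ attains the non-positive value $\bar u(x^*)$ at the interior point $x^* \in \Omega'$, the strong maximum principle (Lemma \ref{str}) forces $\bar u \equiv \bar u(x^*)$ in $\Omega'$, contradicting the choice of $\Omega'$ and completing the proof.
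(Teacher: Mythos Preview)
Your proposal is correct and follows essentially the same approach as the paper: the paper's proof simply observes that (\ref{inf}) prevents the infimum of $\bar u = u/\psi$ from escaping to infinity, so if $\bar u$ is negative somewhere its infimum is attained at an interior point, after which the argument of Theorem \ref{max} applies verbatim. You have spelled out the localization step (choosing $R$ and passing to the compact set $\overline{\Omega\cap B_R}$) more carefully than the paper does, but the strategy is identical.
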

\begin{proof}
Note that the assumption (\ref{inf}) implies that  the minimum of
$u/\psi$ will not go to infinity. Then the minimum of $u/\psi$  lies
only in the interior of $\Omega$. Applying the same argument as in the
proof of Theorem \ref{max}, the conclusion follows.
\end{proof}

If some particular $\psi(x)$ is given, then $c(x)$ could be
controlled explicitly, which is especially useful in applying the
maximum principle. We call the following useful maximum principle as
``Decay at infinity".

\begin{corollary}
(Decay at infinity) Assume that there exists $R>0$ such that
\begin{equation}
c(x)\leq
\frac{-q(\Lambda_2(q+1)-\Lambda_1(n-1))}{|x|^2}-\frac{\gamma q}{|x|}
\quad \quad \mbox{for} \ |x|>R \label{dec}
\end{equation}
and
\begin{equation}
\liminf_{|x|\to\infty} u(x)|x|^q\geq 0. \label{cay}
\end{equation}
Let $\Omega$ be a region in $\mathbb B^c_R(0)=\mathbb R^n\backslash
\mathbb B_R(0)$. If $u$ satisfies (\ref{key}) in $\Omega$, then
$$u(x)\geq 0   \quad \quad \mbox{for all } \ x\in\Omega.$$
\end{corollary}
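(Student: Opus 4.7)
The plan is to apply Lemma~\ref{con} with the explicit radial barrier $\psi(x) := |x|^{-q}$, which is smooth and strictly positive on the closure of any subdomain of $\mathbb{B}_R^c(0)$.

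First I would invoke the eigenvalue computation given just before equation~(3.1): the eigenvalues of $D^2\psi$ are $q(q+1)|x|^{-q-2}$ (simple, in the radial direction) and $-q|x|^{-q-2}$ (multiplicity $n-1$). Feeding these into the definition of $\mathcal{M}^+_{\Lambda_1,\Lambda_2}$ produces
\begin{equation*}
\mathcal{M}^+_{\Lambda_1,\Lambda_2}(D^2\psi)(x) \;=\; \frac{q(\Lambda_2(q+1)-\Lambda_1(n-1))}{|x|^2}\,\psi(x),
\end{equation*}
exactly as in (3.1). Setting $\lambda(x) := -\mathcal{M}^+_{\Lambda_1,\Lambda_2}(D^2\psi)(x)/\psi(x)$, the supersolution hypothesis (\ref{aux}) is then satisfied with equality.

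Second, I would compute $|D\psi(x)|=q|x|^{-q-1}$, so $|D\psi|/\psi = q/|x|$. With this substitution, the hypothesis (\ref{est}) of Theorem~\ref{max} becomes
\begin{equation*}
c(x) \;\le\; \lambda(x) - \gamma\,|D\psi|/\psi \;=\; \frac{-q(\Lambda_2(q+1)-\Lambda_1(n-1))}{|x|^2} - \frac{\gamma q}{|x|},
\end{equation*}
which is precisely the assumed pointwise bound (\ref{dec}). Likewise, the decay assumption (\ref{cay}) rewrites as $\liminf_{|x|\to\infty} u(x)/\psi(x) \ge 0$, matching (\ref{inf}). All hypotheses of Lemma~\ref{con} are in force on $\Omega \subset \mathbb{B}_R^c(0)$, so Lemma~\ref{con} immediately delivers $u \ge 0$ on $\Omega$.

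There is essentially no obstacle: the corollary is a quantitative specialization of the unbounded-domain maximum principle to the canonical polynomial barrier $|x|^{-q}$, with the constants dictated by the radial Pucci computation~(3.1). The only minor point to verify is that $\psi$ is of class $C^2$ and positive on $\bar\Omega$, which holds because $\Omega$ avoids the origin. The virtue of this statement is that it converts the abstract hypothesis (\ref{est}) into an explicit decay condition on $c(x)$ at infinity, which is exactly the form needed to run the moving plane argument on exterior domains in the proof of Theorem~\ref{th1}.
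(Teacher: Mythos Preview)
Your proposal is correct and follows exactly the same route as the paper: choose the barrier $\psi(x)=|x|^{-q}$, invoke the radial Pucci computation~(3.1), and apply Lemma~\ref{con}. You have simply spelled out the verification of hypotheses (\ref{aux}), (\ref{est}), and (\ref{inf}) in more detail than the paper's two-line proof, which is fine.
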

\begin{proof}
We consider the specific function $\psi(x)=|x|^{-q}$, As we know,
$$ \mathcal{M}_{\Lambda_1, \Lambda_2}^+(D^2\psi)(x)-\frac{q(\Lambda_2
(q+1)-\Lambda_1(n-1))}{|x|^2}\psi(x)=0. $$ Applying Lemma \ref{con},
we conclude the proof.
\end{proof}

\begin{remark}
i)It is similar to Remark \ref{rem1}, the conclusion holds when
(\ref{dec}) is true at points where $u$ is negative.

ii) In the case of $\gamma=0$, $c(x)\leq
\frac{-q(\Lambda_2(q+1)-\Lambda_1(n-1))}{|x|^2}$. Notice that $c(x)$
may not be needed to be negative in order that the maximum principle
holds. \label{rem2}
\end{remark}

In the rest of this section, we are going to adapt the moving plane
technique in  the viscosity solution setting to prove Theorem
\ref{th1}. We refer to the book \cite{CL} for more account of the
moving plane method in semilinear elliptic equations. Before we
carry out the moving plane method, we introduce several necessary
notations. Set
$$\Sigma_{\lambda}=\{x=(x_1,\cdots, x_n)\in \mathbb R^n|
x_1<\lambda\}$$ and $T_\lambda=\partial\Sigma_{\lambda}$. Define
$x^{\lambda}$ be the reflection of $x$ with respect to $T_\lambda$,
i.e. $x^\lambda=(2\lambda-x_1, x_2,\cdots, x_n)$. Let
$$u_\lambda(x)=u(x^{\lambda})$$ and
$$v_\lambda(x)=u_\lambda(x)-u(x).$$ The moving plane method to obtain the radial symmetry
 consists of two steps.
In the first step, we show that the plane can move, that is, we will
deduce  that, for sufficiently negative $\lambda$,
\begin{equation}
v_{\lambda}(x)\geq 0, \quad \quad \forall x \ \in \Sigma_\lambda,
\label{pla}
\end{equation}
where we are going to use the corollary of decay at infinity. In the
second step, we will move the plane $T_\lambda$ to the right as long
as (\ref{pla}) holds. The plane will stop at some critical position,
say at $\lambda=\lambda_0$. We will verify that
\begin{equation}
v_{\lambda_0}\equiv 0, \quad \quad \forall x\in\Sigma_{\lambda_0}.
\label{cri}
\end{equation} These two steps imply that
$u(x)$ is symmetric and monotone decreasing about the plane
$T_{\lambda_0}$. Since the equation (\ref{equ1}) is invariant under
rotation, we can further infer that $u(x)$ must be radially symmetric
with respect to some point.

\begin{proof} [Proof of Theorem \ref{th1}:]
We derive the proof in two steps.

\emph{Step 1}: By the hypothesis (\ref{red2}), $u_\lambda$ satisfies
the same equation as $u$ does. Thanks to Lemma \ref{diff} for the
case $\gamma=0$,
\begin{equation}
\mathcal{M}^{-}_{\Lambda_1, \Lambda_2}(D^2
v_{\lambda})+p\psi_{\lambda}^{p-1}(x)v_{\lambda}(x)\leq 0,
\label{sub}
\end{equation}
where $\psi_{\lambda}(x)$ is between $u_{\lambda}(x)$ and $u(x)$. In
order to apply the corollary of decay at infinity, by $(ii)$ in
Remark \ref{rem2}, it is sufficient to verify that
\begin{equation}\psi_{\lambda}^{p-1}(x)\leq
\frac{C}{|x|^2} \label{cccw}
\end{equation} and
\begin{equation}
\liminf_{|x|\to\infty}v_{\lambda}(x)|x|^q\geq 0. \label{lim}
\end{equation}

For (\ref{cccw}), to be more precise, we only need to show that
(\ref{cccw}) holds at the points $\tilde{x}$ where $v_{\lambda}$ is
negative (see Remark \ref{rem2}). At those points,
$$u_{\lambda}(\tilde{x})< u (\tilde{x}).$$
Then
$$0\leq u_{\lambda}(\tilde{x})\leq \psi_{\lambda}(\tilde{x})\leq u
(\tilde{x}).$$ By the decay assumption (\ref{asp}), we derive that
$$c(\tilde{x})=p\psi_{\lambda}^{p-1}(\tilde{x})\leq
o(|\tilde{x}|^{-2})\leq C|\tilde{x}|^{-2}, $$ that is, (\ref{cccw})
is satisfied. Note that the fact   $\lambda$ is sufficiently close
to negative infinity is applied. By the decay assumption (\ref{asp})
again, for any small $\eps$,
$$\liminf_{|x|\to\infty}v_{\lambda}(x)|x|^q\geq
\liminf_{|x|\to\infty}-u(x)|x|^q\geq
\liminf_{|x|\to\infty}\frac{-\epsilon}{|x|^{\frac{2}{p-1}-q}} .$$ If
$\frac{2}{p-1}-q>0$, then (\ref{lim}) is
fulfilled. Hence we fixed $ 0<q<\min \{ \frac{2}{p-1},
(n^\ast-2)\}$.
\\

\emph{Step 2}: We continue to move the plane $T_\lambda$ to the
right as long as (\ref{pla}) holds. Define
$$\lambda_0=\sup\{\lambda \ |v_\mu(x)\geq 0 \ \mbox{in} \ \Sigma_\mu \ \mbox{for every }\ \mu\leq \lambda\}. $$
Since $u(x)\to 0$ as $|x|\to\infty$, we infer that $\lambda_0<
\infty$. If $\lambda_0>0$, by the translation invariance of the
equation, we may do a translation to let the critical position be
negative. If $\lambda_0=0$, we move the plane from the positive
infinity to the left. If $\lambda_0=0$ again, we obtain the symmetry
of the solution at $x_1=0$. In all the cases, we may consider
$\Sigma_{\lambda_0}$ with $\lambda_0<0$, which avoids the
singularity of $\psi(x)=|x|^{-q}$ at the origin. Our goal is to show
that $v_{\lambda_0}(x)\equiv 0$ in $\Sigma_{\lambda_0}$. Otherwise,
by the strong maximum principle in Lemma \ref{hop},  we have
$v_{\lambda_0}>0$ in $\Sigma_{\lambda_0}$. If this is the case, we
will show that the plane can continue to move  to the right a little
bit more, that is, there exists a $\epsilon_0$ such that, for all
$0<\epsilon<\epsilon_0$, we have
\begin{equation}
v_{\lambda_0+\epsilon}\geq 0, \quad \forall x\in
\Sigma_{\lambda_0+\epsilon}. \label{arg}
\end{equation}
It contradicts the definition of $\lambda_0$. Therefore, (\ref{cri})
must be true. Set
$$\bar
v_{\lambda}(x):=\frac{v_{\lambda}(x)}{\psi(x)}.$$
 Suppose that (\ref{arg}) does not hold, then
there exist a sequence of $\epsilon_i$ such that $\epsilon_i\to 0$
and a sequence of $\{x^i\}$,  where $\{x^i\}$ is the minimum point
such that $$\bar
v_{\lambda}(x)=\liminf_{\Sigma_{\lambda_0+\epsilon_i}}v_{\lambda}(x).$$
 We claim that there exists a $\bar R$ such that
$|x^i|<\bar R$ for all $i$. For a clear presentation, this claim is
verified in Lemma \ref{lemq} below. By the boundedness of $\{x^i\}$,
there exists a subsequence of $\{x^i\}$ converging to some point
$x^0\in \Sigma_{\lambda_0}$. Since
$$\bar v_{\lambda_0}(x^0)=\lim_{i\to\infty}\bar v_{\lambda_0+\epsilon_i}(x_i)\leq 0$$
and $\bar v_{\lambda_0}(x)>0 $ for $x\in \Sigma_{\lambda_0}$, we
obtain that $x^0\in T_{\lambda_0}$ and $\bar v_{\lambda_0}(x^0)=0$.
By the regularity of fully nonlinear equations in Lemma \ref{regu}
and the fact that $\psi(x)\in C^2(\Sigma_{\lambda_0})$, we know that
at least $\bar v_{\lambda}(x)\in C^1(\Sigma_{\lambda_0}).$
Consequently,
$$ \nabla \bar v_{\lambda_0}(x^0)=\lim_{i\to\infty}\nabla \bar
v_{\lambda_0+\epsilon_i}(x^i)=0.$$ It follows that
\begin{equation}
\nabla v_{\lambda_0}(x^0)=\nabla \bar
v_{\lambda_0}(x^0)\psi(x^0)+\bar v_{\lambda_0}(x^0)\nabla
\psi(x^0)=0 \label{gra}.
\end{equation}
Since $v_{\lambda_0}(x^0)=0$ and $v_{\lambda_0}(x)>0$ for $x\in
\Sigma_{\lambda_0}$, thanks to  the Hopf lemma (i.e. Lemma
\ref{hop}) , we readily get that
$$\frac{\partial v_{\lambda_0}}{\partial n}(x^0)<0,$$
where $n$ is the outward normal at $T_{\lambda_0}$. It is a
contradiction to (\ref{gra}). In the end, we conclude that
$u_{\lambda_0}(x)\equiv u(x)$, i.e. (\ref{cri}) holds.

\end{proof}
The following lemma verifies the claim in the proof of Theorem
\ref{th1}.
\begin{lemma}
There exists a $\bar R$ (independent of $\lambda$) such that
$|x_0|<\bar R$, where $x_0$ is the point where $\bar v_{\lambda}(x)$
achieves the minimum and $\bar v_{\lambda}(x_0)<0$. \label{lemq}
\end{lemma}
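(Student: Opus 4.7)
\bigskip
\noindent\textbf{Proof proposal.} The plan is to apply the ``Decay at infinity'' corollary to $v_{\lambda}$ on the complement of a large ball $B_{\bar R}$, forcing $v_{\lambda} \geq 0$ outside $B_{\bar R}$ and thereby confining any negative minimum of $\bar v_{\lambda}$ to $B_{\bar R}$. The key observation is that a $\lambda$-independent choice of $\bar R$ is already forced by the decay of $u$ alone.

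Recall from (\ref{sub}) that $v_{\lambda}$ satisfies $\mathcal{M}^-_{\Lambda_1,\Lambda_2}(D^2 v_{\lambda}) + c(x)\, v_{\lambda} \leq 0$ with $c(x) = p\,\psi_{\lambda}^{p-1}(x)$. At any point $\tilde x$ where $v_{\lambda}(\tilde x) < 0$, the ordering $0 \leq u_{\lambda}(\tilde x) \leq \psi_{\lambda}(\tilde x) \leq u(\tilde x)$ combined with (\ref{asp}) gives $c(\tilde x) \leq p\,u(\tilde x)^{p-1} = o(|\tilde x|^{-2})$. Since $0<q<n^\ast-2$ was already fixed in Step~1, the quantity $-q(\Lambda_2(q+1)-\Lambda_1(n-1))$ is a strictly positive constant, so this $o(|x|^{-2})$ estimate lies below the Decay-at-infinity threshold $-q(\Lambda_2(q+1)-\Lambda_1(n-1))/|x|^2$ once $|x|$ exceeds some radius $\bar R$ that depends only on the decay profile of $u$ and on $\Lambda_1,\Lambda_2,n,q$. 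Crucially, the bound $\psi_{\lambda}\le u$ on the negative set of $v_{\lambda}$ involves no information about the reflected profile beyond the global decay of $u$, so the same $\bar R$ works for every $\lambda$.

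The remaining hypothesis $\liminf_{|x|\to\infty} v_{\lambda}(x)|x|^q \geq 0$ follows from the choice $q < 2/(p-1)$ already made in Step~1, because (\ref{asp}) then yields $u(x)|x|^q \to 0$ and hence $v_{\lambda}(x)|x|^q \geq -u(x)|x|^q \to 0$. Applying the Decay-at-infinity corollary in the form of Remark~\ref{rem2}(i), which permits the coefficient bound to be verified only at points where the solution is negative, to $v_{\lambda}$ on the exterior region $\Sigma_{\lambda_0+\epsilon}\cap\{|x|>\bar R\}$ yields $v_{\lambda}\geq 0$ there. Since $\psi(x)=|x|^{-q}>0$, this forces $\bar v_{\lambda}\geq 0$ whenever $|x|>\bar R$, and therefore every minimum point $x_0$ with $\bar v_{\lambda}(x_0)<0$ satisfies $|x_0|\leq \bar R$.

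The one delicate issue to monitor is the uniformity of $\bar R$ in $\lambda$: this is the main (but quite mild) obstacle, and it is resolved by the $\lambda$-free comparison $\psi_{\lambda}\le u$ on the negative set of $v_{\lambda}$, which reduces the whole estimate on $c$ to the global decay hypothesis on $u$.
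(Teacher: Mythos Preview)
Your invocation of the Decay-at-infinity corollary has a gap: that corollary (through (\ref{key})) requires $v_\lambda\ge 0$ on the \emph{entire} boundary of the region $\Omega=\Sigma_{\lambda_0+\epsilon}\cap\{|x|>\bar R\}$, and you have only checked this on the hyperplane $T_{\lambda_0+\epsilon}$, not on the inner sphere $\{|x|=\bar R\}$. Remark~\ref{rem2}(i) relaxes the hypothesis on the coefficient $c(x)$, but it does not waive the boundary condition. Without it, the infimum of $\bar v_\lambda$ over $\Omega$ may well sit on $\{|x|=\bar R\}$, and the corollary gives nothing. Your intermediate claim ``$v_\lambda\ge 0$ on $\{|x|>\bar R\}$'' is in fact stronger than the lemma and cannot be extracted from the corollary alone.

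The paper avoids this by arguing directly by contradiction, exploiting that $x_0$ is by hypothesis the \emph{global} minimum of $\bar v_\lambda$ over all of $\Sigma_\lambda$. If $|x_0|$ were large, then $x_0$ is an interior point of $\Sigma_\lambda$, and your decay computation (which is correct) gives $\bar c(x)\le 0$ in a neighborhood of $x_0$. The strong maximum principle (Lemma~\ref{str}) then forces $\bar v_\lambda$ to be constant there, and by continuation constant on $\{|x|\ge |x_0|\}\cap\Sigma_\lambda$, contradicting $\bar v_\lambda(x)\to 0$ as $|x|\to\infty$. The point is that one never needs the sign of $v_\lambda$ on $\{|x|=\bar R\}$: the assumption that $x_0$ is already a global interior minimum replaces the missing boundary data. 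All of your ingredients---the $\lambda$-free bound $\psi_\lambda\le u$ on the negative set, the choice of $q<\min\{2/(p-1),\,n^\ast-2\}$, the vanishing of $\bar v_\lambda$ at infinity---are exactly right; only the final step should be recast as this direct strong-maximum-principle argument rather than a black-box call to the corollary.
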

\begin{proof}
If $|x_0|$ is sufficiently large, by the decay rate of $u$,
\begin{equation}
c(x_0)=p\psi_{\lambda}^{p-1}(x_0)<
C|x_0|^{-2}=-\frac{\mathcal{M}^{+}_{\Lambda_1, \Lambda_2}(D^2
\psi)(x_0)}{\psi(x_0)},
\label{une}
\end{equation}
where $C=-q(\Lambda_2(q+1)-\Lambda_1(n-1))>0$ and
$\psi(x)=|x|^{-q}$. It follows from the argument of Theorem
\ref{max} in the case of $\gamma=0$ that
$$
\mathcal{M}_{\Lambda_1, \Lambda_2}^-(D^2\bar v_{\lambda})- \bar b
(x)|D\bar v_{\lambda}|+\bar c(x)\bar v_{\lambda} \leq 0 \quad
\mbox{in} \ \Sigma_{\lambda}.
$$
Here $$\bar b(x)=\frac{2\sqrt{n}\Lambda_2 |D\psi|}{\psi}$$ and
$$\bar c(x)=c(x)+\frac{\mathcal{M}_{\Lambda_1, \Lambda_2}^+(D^2
\psi)}{\psi}.$$ From (\ref{une}), we see that there exists a
neighborhood $\Omega'$ of $x_0$ such that $\bar c(x)<0$ in $\Omega'
$. The strong maximum principle in Lemma \ref{str} further implies
that
\begin{equation}\bar v_{\lambda}(x)\equiv \bar v_{\lambda}(x_0)<0
\quad \mbox{for} \ |x|>|x_0|.
\label{any}
\end{equation} On the other
hand,
$$\bar
v_{\lambda}(x)=[o(|x_\lambda|^{-\frac{2}{p-1}})-o(|x|^{-\frac{2}{p-1}})|x|^q]\to
0$$ as $|x|\to \infty$,  which contradicts (\ref{any}). Hence the
lemma is completed.
\end{proof}

\begin{proof}[Proof of Corollary \ref{coro}:]
Adopting the same notations in the proof of Theorem \ref{th1}, for
the general function $f(u)$, we have
\begin{equation}
\mathcal{M}^{-}_{\Lambda_1, \Lambda_2}(D^2
v_{\lambda})+c_{\lambda}(x)v_{\lambda}(x)\leq 0, \label{sub2}
\end{equation}
where
\begin{equation}
 c_\lambda(x)= \left\{ \begin{array}{lll}
\frac{f(u_\lambda(x))-f(u(x))}{u_\lambda(x)-u(x)},  \quad \
&\mbox{if} \
u_\lambda(x)\not= u(x), \\
\\
0, \quad &\mbox{otherwise}.
\end{array}
\right.
\end{equation}
As argued in Theorem \ref{th1}, we should verify that
\begin{equation}
c_{\lambda}(x)\leq \frac{C}{|x|^2} \label{www}
\end{equation} and
\begin{equation}
\liminf_{|x|\to\infty}v_{\lambda}(x)|x|^q\geq 0. \label{lim2}
\end{equation}
We only need to focus on the points $\tilde{x}$ where
$u_{\lambda}(\tilde{x})< u (\tilde{x})$ for (\ref{www}). From the
assumption (\ref{assf}), if $\tilde{x}$ is large enough,
\begin{equation}
c_{\lambda}(\tilde{x}) \leq
c(|u_\lambda|+|u|)^{\alpha}(\tilde{x})=O(|\tilde{x}|^{(2-n^{\ast})\alpha})\leq
\frac{C}{|\tilde{x}|^2}
\end{equation}
for $\alpha>\frac{2}{n^{\ast}-2}.$ Recall that
$n^\ast=\frac{\Lambda_1}{\Lambda_2}(n-1)+1$. Since $u(x)$ is
positive, then
$$v_{\lambda}(x)|x|^q>-u(x)|x|^q.$$
If $u(x)=O(|x|^{2-n^\ast})$, then
$$\liminf_{|x|\to\infty}v_{\lambda}(x)|x|^q\geq \liminf_{|x|\to
\infty}-u(x)|x|^q=0$$ for $0<q< n^\ast-2$. Hence (\ref{www}) and
(\ref{lim2}) are satisfied. The rest of proof follows from the same
argument in Theorem \ref{th1}.
\end{proof}

\section{Symmetry of Viscosity Solutions in a Punctured Ball}
In this section, we consider the radial symmetry of viscosity
solutions in a punctured ball. Due to the singularity of the point,
the corresponding maximum principle shall be established. Instead of
only considering sufficiently small domains, our result is valid under
the appropriate  upper bound of $c(x)$. The result also holds  if $c(x)$ is bounded
and the domain is appropriately small. Thanks to Lemma \ref{diff}, we
only consider the following equation.
\begin{equation}
\mathcal{M}_{\Lambda_1, \Lambda_2}^-(D^2 u)-\gamma|Du|+c(x)u\leq 0
\quad \ \mbox{in} \ \Omega\backslash \{0\}. \label{sin}
\end{equation}

\begin{lemma}
Let  $\Omega$ be a connected and bounded domain in $\mathbb R^n$ and $u$
be the viscosity solution of (\ref{sin}). Assume that $c(x)\in
L^{\infty}(\Omega\backslash\{0\})$, and
\begin{equation}
\left \{ \begin{array}{lll} c(x)\leq
\frac{q(\Lambda_1(n-1)-\Lambda_2(q+1))}{|x|^2}-\frac{\gamma
q}{|x|} \ \mbox{with}\ 0<q<n^\ast-2 \quad &\mbox{if} \ n^{\ast}>2, \\
\vspace{2mm}
 or \\
 \vspace{2mm}
c(x)\leq {\Lambda_2}/4 (-\ln|x|)^{-2}|x|^{-2}-\gamma/2
(-\ln|x|)^{-1}|x|^{-1} \ \mbox{with}\ |x|\leq 1 \ \mbox{in} \ \Omega
\quad &\mbox{if} \ n^{\ast}=2.
\end{array}
\right. \label{ccc}
\end{equation}
Moreover, $u$ is bounded from below and $u\geq 0$ on $\partial\Omega$.
Then $u\geq 0$ in $\Omega\backslash \{0\}.$ \label{sma}
\end{lemma}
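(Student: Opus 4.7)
The strategy is to apply (a variant of) the proof of Theorem \ref{max} with a carefully chosen auxiliary $\psi$, compensating for the singularity at the origin by exploiting the blow-up of $\psi$ there. Set $\bar u = u/\psi$ and argue by contradiction: if $u(y) < 0$ for some $y \in \Omega \setminus \{0\}$, then $\bar u < 0$ somewhere; $\bar u \geq 0$ on $\partial\Omega$; and the boundedness from below of $u$ combined with $\psi(x) \to +\infty$ as $x \to 0$ yields $\bar u(x) \to 0$ as $x \to 0$. Consequently, the negative infimum of $\bar u$ must be attained at some interior point $x^* \in \Omega \setminus \{0\}$, bounded away from both $\partial\Omega$ and the origin.

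\textbf{Choice of $\psi$.} For $n^* > 2$, take $\psi(x) = |x|^{-q}$ with $0 < q < n^* - 2$; formula \eqref{spe} records $\mathcal{M}^+_{\Lambda_1, \Lambda_2}(D^2\psi) = \frac{q[\Lambda_2(q+1) - \Lambda_1(n-1)]}{|x|^2}\psi$, whose numerator is negative in this range of $q$, and $|D\psi|/\psi = q/|x|$. With $\lambda(x) := -\mathcal{M}^+(D^2\psi)/\psi$, hypothesis \eqref{ccc} is precisely the condition $c(x) \leq \lambda(x) - \gamma |D\psi|/\psi$ of Theorem \ref{max}. For $n^* = 2$ the power family is no longer usable and I would instead take $\psi(x) = (-\ln|x|)^{1/2}$ for $|x| < 1$. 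A direct computation of the two eigenvalues $\psi''(|x|)$ (positive for small $|x|$) and $\psi'(|x|)/|x|$ (negative, with multiplicity $n-1$) gives
$$\mathcal{M}^+_{\Lambda_1, \Lambda_2}(D^2 \psi) = \frac{\Lambda_2 - \Lambda_1(n-1)}{2|x|^2(-\ln|x|)^{1/2}} - \frac{\Lambda_2}{4|x|^2(-\ln|x|)^{3/2}},$$
and the critical relation $\Lambda_1(n-1) = \Lambda_2$ (equivalent to $n^* = 2$) annihilates the first term, leaving $\mathcal{M}^+(D^2\psi) = -\Lambda_2/[4|x|^2(-\ln|x|)^{3/2}]$. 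Together with $|D\psi|/\psi = 1/[2|x|(-\ln|x|)]$, the required control on $c(x)$ collapses into the second case of \eqref{ccc}.

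\textbf{Completion.} Choose a connected open neighborhood $\Omega' \Subset \Omega \setminus \{0\}$ of $x^*$ on which $\bar u < 0$. Lemma \ref{tec} (with $b \equiv \gamma$, applied pointwise where $\bar u < 0$) yields
$$\mathcal{M}^-_{\Lambda_1, \Lambda_2}(D^2 \bar u) - \bar b(x) |D\bar u| + \bar c(x) \bar u \leq 0 \quad \text{in } \Omega',$$
with $\bar b \in L^\infty(\Omega')$ and $\bar c(x) = c(x) + [\mathcal{M}^+(D^2\psi) + \gamma|D\psi|]/\psi \leq 0$ by the choice of $\psi$ together with \eqref{ccc}. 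The strong maximum principle (Lemma \ref{str}) forces $\bar u \equiv \bar u(x^*)$ on $\Omega'$. Since the set $\{\bar u = \bar u(x^*)\}$ is closed by continuity and open by local reapplication of the same argument at each of its points (which works at any point of $\Omega \setminus \{0\}$ because the effective coefficients are locally bounded away from $0$), connectedness of $\Omega \setminus \{0\}$ (valid for $n \geq 2$) propagates $\bar u \equiv \bar u(x^*) < 0$ throughout, contradicting $\bar u(x) \to 0$ as $x \to 0$.

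The chief technical difficulty is locating the correct auxiliary function in the critical case $n^* = 2$: the family $|x|^{-q}$ ceases to work because the Pucci coefficient in \eqref{spe} has the wrong sign for every admissible $q$, and the logarithmic profile $(-\ln|x|)^{1/2}$ is needed precisely so that the leading Pucci eigenvalues cancel (thanks to $n^*=2$), leaving a strictly negative remainder of size $|x|^{-2}(-\ln|x|)^{-3/2}$. This cancellation is what dictates the unusual logarithmic weights in the second case of hypothesis \eqref{ccc}.
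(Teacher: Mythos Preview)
Your proof is correct and follows essentially the same route as the paper's: the same auxiliary functions $\psi(x)=|x|^{-q}$ (when $n^\ast>2$) and $\psi(x)=(-\ln|x|)^{1/2}$ (when $n^\ast=2$), the same reduction $\bar u=u/\psi$, the same observation that the blow-up of $\psi$ at the origin forces the negative infimum of $\bar u$ to be attained at an interior point of $\Omega\setminus\{0\}$, and the same contradiction via Lemma~\ref{tec} plus the strong maximum principle. One small imprecision: from ``$u$ bounded below'' and $\psi\to+\infty$ you only get $\liminf_{x\to 0}\bar u(x)\ge 0$, not $\bar u(x)\to 0$; but the weaker statement is exactly what the paper uses and is all the argument needs.
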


\begin{proof}
Our proof is based on the idea in Theorem \ref{max}. Recall again
that $n^{\ast}=\frac{\Lambda_1}{\Lambda_2}(n-1)+1$. If $n^\ast>2$,
let $\psi(x)=|x|^{-q}$. If $n^\ast=2$, we select
$\psi(x)=(-\ln|x|)^a$, where $0<a<1$.  Set $$\bar
u(x):=\frac{u(x)}{\psi(x)}.$$ Since $u$ is bounded from  below in
$\Omega\backslash \{0\}$ and $\psi(x)\to \infty $ as $|x|\to 0$,
then
$$\liminf_{|x|\to 0} \bar u(x)\geq 0.$$
It is easy to know that  $\bar u(x)\geq 0$ on $\partial \Omega$.
Suppose $u(x)<0$ somewhere in $\Omega\backslash \{0\}$, then $\bar
u(x)<0$ somewhere in $\Omega\backslash \{0\}$. Hence
$\inf_{\Omega\backslash \{0\}}\bar u(x)$ is achieved at some point
$x_0\in \Omega\backslash \{0\}$. Therefore, we can find a
neighborhood $\Omega'$ of $x_0$ such that $\bar u(x)<0$ and $\bar
u(x)\not \equiv \bar u(x_0)$ in $\Omega'$. Otherwise, $\bar
u(x)\equiv \bar u(x_0)$ in $\Omega'$, which is obviously impossible.
Recall in Theorem \ref{max} that,
\begin{equation}
\mathcal{M}_{\Lambda_1, \Lambda_2}^-(D^2\bar u)- \bar b (x)|D\bar
u|+\bar c(x)\bar u\leq 0 \quad \mbox{in} \ \Omega',
\end{equation}
where
$$\bar
c(x)=c(x)+\frac{\mathcal{M}_{\Lambda_1, \Lambda_2}^+(D^2
\psi)+\gamma|D\psi|}{\psi}.$$

In order to apply the strong maximum principle, we need $\bar c(x)\leq
0$, i.e.
\begin{equation} c(x)\leq -\frac{\mathcal{M}_{\Lambda_1,
\Lambda_2}^+(D^2 \psi)+\gamma|D\psi|}{\psi}. \label{atl}
\end{equation} If
$n^{\ast}>2$, then $\psi(x)=|x|^{-q}$,
 $$
\frac{\mathcal{M}_{\Lambda_1, \Lambda_2}^+(D^2
\psi)+\gamma|D\psi|}{\psi}=\frac{q(\Lambda_2(q+1)-\Lambda_1(n-1))}{|x|^2}+\frac{\gamma
q}{|x|}.$$ Let
$$c(x)\leq \frac{q(\Lambda_1(n-1)-\Lambda_2(q+1))}{|x|^2}-\frac{\gamma
q}{|x|}.
$$
Then (\ref{atl}) is satisfied.

 If $n^{\ast}=2$, then $\psi(x)=(-\ln|x|)^a$,
$$\frac{\mathcal{M}_{\Lambda_1, \Lambda_2}^+(D^2
\psi)+\gamma|D\psi|}{\psi}=\Lambda_2(a-1)a(-\ln|x|)^{-2}|x|^{-2}+\gamma
a(-\ln|x|)^{-1}|x|^{-1}.
$$
Hence  we may assume that
\begin{equation}
c(x)\leq {\Lambda_2}/4 (-\ln|x|)^{-2}|x|^{-2}-\gamma/2
(-\ln|x|)^{-1}|x|^{-1}, \label{ass}
\end{equation}
which implies that (\ref{atl}) holds for $a=1/2$.  If $c(x)$ is in
the above range, by the strong maximum principle in Lemma \ref{str},
we readily deduce that $u(x) \equiv u(x_0)$ in $\Omega'$. We then
arrive at a contradiction. The proof of the lemma follows.
\end{proof}

\begin{remark}
The assumption (\ref{ccc}) is clearly satisfied when $|c(x)|$ is
bounded and $\Omega$ is sufficiently small.
\end{remark}

With the above maximum principle in hand, we are able to prove the
radial symmetry of viscosity solutions. We adapt the argument of
\cite{CLN1} in our setting. Let the domain $O$ be bounded and convex
in direction of $x_1$, symmetric with respect to the hyperplane
$\{x_1=0\}$. We prove the radial symmetry and monotonicity
properties in $O$. Theorem \ref{th2} is an immediate consequence of
Theorem \ref{sss} below. Let us first introduce several notations.
Set
$$\Sigma_{\lambda}:=\{x=(x_1,\cdots, x_n)\in O|
x_1<\lambda\}$$ and $T_\lambda=\{x\in O| x_1=\lambda \}$. Define
$x^{\lambda}$ be the reflection of $x$ with respect to $T_\lambda$.
Let $$u_\lambda(x)=u(x^{\lambda})$$ and
$$v_\lambda(x)=u_\lambda(x)-u(x).$$

\begin{theorem}
Let $u\in C(\bar O \backslash \{0\})$ be a positive viscosity
solution of
\begin{equation}
F(D u, D^2 u)+f(u)=0 \quad  \quad \mbox{in} \ \bar O \backslash
\{0\}.\label{sph}
\end{equation}
Then $u$ is symmetric in $x_1$, that is, $u(x_1,x_2,\cdots,
x_n)=u(-x_1,x_2,\cdots, x_n)$ for all $x\in \ O \backslash \{0\}$.
In addition, $u$ is strictly increasing in $x_1<0$. \label{sss}
\end{theorem}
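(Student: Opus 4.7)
The plan is to carry out the moving plane method in the $x_1$-direction on $\bar{O}\setminus\{0\}$, adapting the Caffarelli-Li-Nirenberg strategy to viscosity solutions through two essential ingredients: Lemma \ref{diff}, which recasts $v_\lambda := u_\lambda - u$ as a subsolution of a linear fully nonlinear inequality, and the new punctured-ball maximum principle of Lemma \ref{sma}. With $\Sigma_\lambda$, $T_\lambda$, $u_\lambda$, $v_\lambda$ as in the preamble and $0^\lambda := (2\lambda, 0, \dots, 0)$ denoting the reflection of the puncture, hypothesis $(F2)$ ensures that $u_\lambda$ solves the same equation as $u$ in the reflected half, and Lemma \ref{diff} yields
\begin{equation*}
\mathcal{M}^-_{\Lambda_1,\Lambda_2}(D^2 v_\lambda) - \gamma|D v_\lambda| + c_\lambda(x)\, v_\lambda \leq 0 \qquad \text{in } \Sigma_\lambda\setminus\{0^\lambda\},
\end{equation*}
with $c_\lambda \in L^\infty_{\mathrm{loc}}(\Sigma_\lambda\setminus\{0^\lambda\})$. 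For $\lambda < 0$ the original puncture $0$ is not in $\Sigma_\lambda$, so $u$ itself is continuous on $\overline{\Sigma_\lambda}$; the only site where $u_\lambda$, and hence $v_\lambda$, may be singular is $0^\lambda$, and $v_\lambda$ is bounded below on every compact subset of $\Sigma_\lambda$ that avoids $0^\lambda$. On $T_\lambda$ we have $v_\lambda = 0$, and on $\partial O \cap \{x_1 < \lambda\}$ we have $v_\lambda = u(x^\lambda) \geq 0$.

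To initiate the method, pick $\lambda$ sufficiently close to $\inf_O x_1$ so that $|\Sigma_\lambda|$ is small; Lemma \ref{sma} (together with the remark following it) then gives $v_\lambda \geq 0$ on $\Sigma_\lambda\setminus\{0^\lambda\}$. Set
\begin{equation*}
\lambda_0 := \sup\bigl\{\,\lambda < 0 : v_\mu \geq 0 \text{ in } \Sigma_\mu \text{ for every } \mu \leq \lambda\,\bigr\},
\end{equation*}
with the aim of proving $\lambda_0 = 0$. By continuity $v_{\lambda_0} \geq 0$, and the strong maximum principle of Lemma \ref{hop} forces either $v_{\lambda_0} \equiv 0$ or $v_{\lambda_0} > 0$ on $\Sigma_{\lambda_0}\setminus\{0^{\lambda_0}\}$. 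If $\lambda_0 < 0$, then the first alternative is impossible: it would impose the reflection identity $u(x_1,x') = u(2\lambda_0 - x_1,x')$, so any $x \in \partial O$ with $x_1 < \lambda_0$ whose reflection $x^{\lambda_0}$ lies in the interior of $O$ (such points exist because $O$ is convex in $x_1$ and symmetric about $T_0$) would satisfy $u(x^{\lambda_0}) = u(x) = 0$, contradicting $u > 0$ on $O \setminus \{0\}$. Hence $v_{\lambda_0} > 0$ strictly on $\Sigma_{\lambda_0}\setminus\{0^{\lambda_0}\}$.

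Assume for contradiction that $\lambda_0 < 0$; the decisive step is to show the plane can be moved a positive amount further to the right. Fix a small $r > 0$ and let $K \subset \Sigma_{\lambda_0}$ denote the compact set obtained by deleting $r$-neighborhoods of $T_{\lambda_0}$, of $\partial O$, and of $0^{\lambda_0}$. Strict positivity and compactness give $v_{\lambda_0} \geq m$ on $K$ for some $m > 0$, and continuity in $\lambda$ produces an $\epsilon > 0$ such that the corresponding shifted compact subset of $\Sigma_\lambda$ satisfies $v_\lambda \geq m/2$ for every $\lambda \in [\lambda_0, \lambda_0 + \epsilon]$. On the residual region $\Sigma_\lambda\setminus K$, a thin shell around $T_\lambda$, $\partial O$, and the puncture $0^\lambda$, apply Lemma \ref{sma} after translating $0^\lambda$ to the origin: $v_\lambda \geq m/2$ on the $\partial K$ portion of the boundary and $v_\lambda \geq 0$ on the $T_\lambda$ and $\partial O$ portions, while the smallness of $r$ automatically forces the bound on $c_\lambda$ required by Lemma \ref{sma}. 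Thus $v_\lambda \geq 0$ on all of $\Sigma_\lambda$, contradicting the maximality of $\lambda_0$. Therefore $\lambda_0 = 0$, proving symmetry; strict monotonicity in $x_1 < 0$ follows because $v_\lambda > 0$ on $\Sigma_\lambda$ for every $\lambda < 0$, and the choice $\lambda = (x_1 + y)/2$ translates this into $u(y, x') > u(x_1, x')$ whenever $x_1 < y \leq -x_1$. The principal obstacle throughout is the reflected puncture $0^\lambda \in \Sigma_\lambda$, which forces us into a punctured half-domain at every stage; Lemma \ref{sma} is the essential new ingredient accommodating this, whereas the classical Alexandrov-Bakelman-Pucci-based small-domain machinery used in the smooth setting does not directly apply.
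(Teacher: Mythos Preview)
There is a genuine gap in your ``move the plane further'' step. You propose to apply Lemma~\ref{sma} to the entire residual region $\Sigma_\lambda\setminus K$, the union of the thin shells near $T_\lambda$, near $\partial O$, and near $0^\lambda$. But this region, while thin, has diameter of order one: points in the shell near $T_\lambda$ or $\partial O$ can sit at distance comparable to $\mathrm{diam}(O)$ from the reflected puncture $0^\lambda$. After translating $0^\lambda$ to the origin, condition~(\ref{ccc}) at such a point reads $c_\lambda(x)\le q(\Lambda_1(n-1)-\Lambda_2(q+1))/|x|^2-\gamma q/|x|$ with $|x|$ of order one, which is a \emph{fixed} bound that does not improve as $r\to 0$. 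Since $c_\lambda$ is governed only by the Lipschitz constant of $f$, there is no reason this bound should hold; the Remark after Lemma~\ref{sma} (``$\Omega$ sufficiently small'') refers to small diameter about the puncture, not small width of a shell. Your assertion that ``the smallness of $r$ automatically forces the bound on $c_\lambda$'' is therefore false.

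The paper repairs exactly this by splitting the residual into two pieces handled by two different maximum principles. On a genuinely small ball $\mathbb B_{r'/2}(0^{\lambda_0})$ around the reflected puncture one applies Lemma~\ref{sma}; here the diameter is small and~(\ref{ccc}) does hold. On $\Sigma_\lambda\setminus(K\cup\mathbb B_{r'/2}(0^{\lambda_0}))$, a set of small \emph{measure} but large diameter and containing no singularity, one applies the ABP-type small-domain principle, Lemma~\ref{min}. Your closing remark that this ``classical Alexandrov--Bakelman--Pucci-based small-domain machinery\dots\ does not directly apply'' is incorrect: Lemma~\ref{min} is stated and valid for viscosity solutions, and it is indispensable here. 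The paper further separates the cases $\lambda_0<-\tfrac12$, $\lambda_0=-\tfrac12$, $-\tfrac12<\lambda_0<0$ according to whether $0^{\lambda_0}=(2\lambda_0,0,\dots,0)$ lies outside $\bar O$, on $\partial O$, or in the interior; in particular, for $\lambda$ near $\inf_O x_1$ the reflected puncture is outside $O$ altogether, so the initiation step should (and in the paper does) simply invoke Lemma~\ref{min}, not Lemma~\ref{sma}.
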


\begin{proof}[Proof of Theorem \ref{sss}] Without loss of
generality, we may assume that $\inf_O x_1=-1$. We carry out the
moving plane method in two steps.

\emph{Step 1}: We show that the plane can move, i.e. there exists
$-1<\lambda_0<-\frac{1}{2}$ such that
$$v_{\lambda}\geq 0 \ \ \mbox{in} \ \Sigma_{\lambda} $$
for $-1<\lambda <\lambda_0$. By (F2), $u_\lambda$ satisfies the same
equation as $u$ does.  Thanks to Lemma \ref{diff}, we know that
$v_{\lambda}$ satisfies
\begin{equation}
\mathcal{M}_{\Lambda_1, \Lambda_2}^-(D^2
v_{\lambda})-\gamma|Dv_{\lambda}|+c_\lambda(x)v_{\lambda}\leq 0
\quad \ \mbox{in} \ O \backslash \{0\}, \nonumber
\end{equation}
where
\begin{equation}
 c_\lambda(x)= \left\{ \begin{array}{lll}
\frac{f(u_\lambda(x))-f(u(x))}{u_\lambda(x)-u(x)},  \quad \
&\mbox{if} \
u_\lambda(x)\not= u(x), \\
\\
0, \quad &\mbox{otherwise}.
\end{array}
\right.
\end{equation}
 Since $f$ is locally Lipschitz in $(0,  \infty)$, then
$|c_\lambda(x)|<C$ in $O$ for some $C>0$. It is clear that
$v_{\lambda}(x)\geq 0$ in $\partial \Sigma_{\lambda}$. If $\lambda$
is sufficient close to $-1$, then $\Sigma_{\lambda}$ is small
enough. By the maximum principle for small domains in Lemma
\ref{min}, we readily deduce that $v_{\lambda}\geq 0$ in
$\Sigma_{\lambda}$. Step 1 is then completed.

 Define
$$\lambda_0=\sup\{ \lambda|-1<\mu<0, \ v_{\mu}\geq 0 \ \mbox{in} \
\Sigma_{\mu} \backslash \{0^\mu\} \ \mbox{for} \
\mu\leq\lambda<0\}.$$

\emph{Step 2}: We are going to show that $\lambda_0=0$. If it is
true, we move the plane from the position where $\sup_O x_1=1$ to
the left. By the symmetry of $O$, the plane will reach $\lambda_0=0$
again. Hence the symmetry of viscosity solutions is obtained. We
divide the proof into three cases and  show that the following cases
are impossible to occur.

\emph{Case 1}: $-1<\lambda_0<-\frac{1}{2}$.

If this is the case, we are going to show that the plane can still
be moved a little bit more to the right.  By the strong maximum principle
in Lemma \ref{hop}, we have $v_{\lambda_0}(x)>0$ in
$\Sigma_{\lambda_0}$. Set $\lambda=\lambda_0+\epsilon$ for
 sufficiently small $\epsilon$. Let $K$ be a compact subset in
$\Sigma_{\lambda_0}$ such that $|\Sigma_{\lambda_0}\backslash
K|<{\delta/2}$. Recall that $\delta$ is the measure of $O$ for which
the maximum principle for small domains in Lemma \ref{min} holds. By
the continuity of $v_{\lambda}$, there exists some $r >0$ such that
$v_{\lambda}>r$ in $K$. In the remaining $\Sigma_{\lambda}\backslash
K$, we can check that $v_\lambda$ satisfies
\begin{equation}
\left \{\begin{array}{lll}
 \mathcal{M}_{\Lambda_1, \Lambda_2}^-(D^2
v_{\lambda})-\gamma|Dv_{\lambda}|+c_\lambda(x)v_{\lambda}\leq 0
\quad \ &\mbox{in} \ \Sigma_{\lambda}\backslash K , \nonumber \\
v_{\lambda}\geq 0 \quad \ &\mbox{on} \
\partial(\Sigma_{\lambda}\backslash K).
\end{array}
\right.
\end{equation}
By the maximum principle for small domains again, $v_{\lambda}\geq 0$
in $\Sigma_{\lambda}\backslash K$ by selecting sufficiently small
$\eps$. Together with the fact that $v_{\lambda}\geq r$ in $K$, we
infer that $v_{\lambda}\geq 0$ in $\Sigma_{\lambda}$. It contradicts
the definition of $\lambda_0$.

\emph{Case 2}: $\lambda_0=-\frac{1}{2}$.

We also argue that the plane can be moved further, which indicates
that $\lambda_0=-\frac{1}{2}$ is impossible. Since $O$ is symmetric
with respect to the hyperplane $x_1=0$, then $0^{-1/2}=(-1,
0,\cdots, 0).$ We select a compact set $K$ in $\Sigma_{-1/2}$ such
that $|\Sigma_{\lambda_0}\backslash K|<\delta/2$. By the positivity
and continuity of  $v_{-1/2}$,  there exists some $r>0$ such that
$v_{-1/2}>r$ in $K$.  Without loss of generality, we may assume that
$dist (K, \Sigma_{-1/2})\geq r'$ for some $r'>0$. We consider  a
small ball $\mathbb B_{{r'}/2}(e)$ centered at $e=(-1, 0, \cdots,
0)$ with radius ${r'}/2$. From the positivity of $v_{-1/2}$ again,
we have, making $r$ smaller if necessary,
$$v_{-1/2}>r/2  \ \mbox{in} \
\partial\mathbb B_{{r'}/2}(e)\cap \bar O.$$ Let
$\lambda=-1/2+\epsilon$ for small $\epsilon>0$. By the continuity of
$v_\lambda$, we get
$$v_\lambda>r/4 \ \mbox{in} \ (\partial \mathbb
B_{{r'}/2}(e)\cap \bar O)\cup K.$$ For such small $\epsilon$,
$0^{-1/2+\epsilon}$ lies in $\mathbb B_{{r'}/2}(e)\cap \bar O$. We
also know that $v_{\lambda}\geq 0$ on $\mathbb B_{{r'}/2}(e)\cap
\partial O$. Therefore,  $$v_{\lambda}\geq 0  \ \mbox{in}  \ \partial(\mathbb B_{{r'}/2}(e)\cap
 O).$$
Choosing $r'$ so small that Lemma \ref{sma} is valid, then
$$v_{\lambda}\geq 0 \ \mbox{in} \ \mathbb B_{{r'}/2}(e)\cap O.$$ We
consider the remaining set $\Sigma_{\lambda}\backslash (K\cup \mathbb
B_{{r'}/2}(e))$. we can verify that $v_{\lambda}$ satisfies
\begin{equation}
\left \{\begin{array}{lll}
 \mathcal{M}_{\Lambda_1, \Lambda_2}^-(D^2
v_{\lambda})-\gamma|Dv_{\lambda}|+c_\lambda(x)v_{\lambda}\leq 0
\quad \ &\mbox{in} \ \Sigma_{\lambda}\backslash (K\cup \mathbb B_{{r'}/2}(e)) , \nonumber \\
v_{\lambda}\geq 0 \quad \ &\mbox{on} \
\partial(\Sigma_{\lambda}\backslash (K\cup \mathbb B_{{r'}/2}(e))).
\end{array}
\right.
\end{equation}
Therefore, for sufficiently small $\epsilon$, the maximum principle of
small domains implies that $v_{\lambda}\geq 0$ in
$\Sigma_{\lambda}\backslash (K\cup \mathbb B_{{r'}/2})$. In
conclusion, $v_{\lambda}\geq 0$ for $\lambda=-1/2+\epsilon$. We
arrive at a contradiction.

\emph{Case 3}: $-1/2<\lambda_0<0$.

We show that this critical position is also impossible. For the
singular point $0^{\lambda_0}$, we choose a ball  $\mathbb
B_{{r'}/2}(0^{\lambda_0})$ centered at $0^{\lambda_0} $ with radius
${r'}/2$. Let $\lambda=\lambda_0+\epsilon$. For $\epsilon>0$ small
enough, $0^{\lambda}$ still lies in  $\mathbb
B_{{r'}/2}(0^{\lambda_0})$. By the continuity and positivity of
$v_{\lambda_0}$, there exists some $r> 0$ such that $v_{\lambda}\geq
r$ on $\partial \mathbb B_{{r'}/2}(0^{\lambda_0})$. Applying Lemma
\ref{sma} for small value of  $r'/2$, we infer that $v_{\lambda}\geq
0$ in $\mathbb B_{{r'}/2}(0^{\lambda_0})$. Similar argument as Case
1 and Case 2 could show that $v_{\lambda}\geq 0$ in
$\Sigma_{\lambda}\backslash\{0\}$ for $\lambda=\lambda_0+\epsilon.$

\end{proof}

\section{The Radial Symmetry for Viscosity Solutions of Fully Nonlinear Parabolic Equations}

We consider the  radial symmetry of fully nonlinear parabolic
equation in this section. we first show that the difference of
supersolution and subsolution of the parabolic equation satisfies an
inequality involving Pucci extremal operator, which enables us to
compare the value of $u$ at $x$ and its value at the reflection of
$x$. The following lemma  is non trivial since $u$  is not of class
$C^2$. The proof of  the lemma below is inspired by the work in
\cite{DS} and \cite{MQ}.

\begin{lemma}
Let $u_1, u_2$ be  a continuous subsolution and supersolution
respectively in $\mathbb R^n\times (0, \ T]$ of
\begin{equation}
\partial_t u- F(D u, D^2 u)-f(u)=0.
\label{par}
\end{equation}
 Then
$\tilde{w}=u_2-u_1$ is a viscosity supersolution of
\begin{equation}
-\partial_t \tilde{w}+\mathcal {M}_{\Lambda_1, \Lambda_2}^-(D^2
\tilde{w})-\gamma |\nabla \tilde{w}|+ c(x, t)\tilde{w}\leq 0,
\label{vis}
\end{equation}
where \begin{equation}
 c(x, t)= \left\{ \begin{array}{lll}
\frac{f(u_1(x, t))-f(u_2(x, t))}{u_1(x, t)-u_2(x, t)},  \quad \
&\mbox{if} \
u_1(x, t)\not= u_2(x, t), \\
\\
0, \quad &\mbox{otherwise}.
\end{array}
\right. \label{god}
\end{equation}
\label{ppp}
\end{lemma}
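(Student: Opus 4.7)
The plan is to adapt the semijet-based proof of the elliptic analogue Lemma \ref{diff} from \cite{DS} to the parabolic setting, replacing the elliptic theorem on sums with its parabolic counterpart from \cite{CIL}. Since $u_1$ and $u_2$ are only continuous, one cannot differentiate them directly; instead, I would double the spatial and temporal variables and extract matching parabolic superjets of $u_1$ and subjets of $u_2$ whose data satisfy a matrix inequality controlled by $D^2\phi$.

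Let $\phi\in C^2$ be a test function such that $\tilde{w}-\phi$ attains a local minimum at $(x_0,t_0)$; without loss of generality the minimum is strict and equal to zero, so that $u_1(x,t)-u_2(x,t)+\phi(x,t)\leq 0$ in a neighborhood of $(x_0,t_0)$ with equality at the point. For $\epsilon>0$, I would introduce
\[
\Phi_{\epsilon}(x,y,t,s)=u_1(x,t)-u_2(y,s)+\phi(x,t)-\frac{|x-y|^2}{2\epsilon^2}-\frac{(t-s)^2}{2\epsilon^2},
\]
and take its maximizer $(x_\epsilon,y_\epsilon,t_\epsilon,s_\epsilon)$ on a small closed neighborhood of $(x_0,x_0,t_0,t_0)$. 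A standard penalization argument gives $(x_\epsilon,y_\epsilon,t_\epsilon,s_\epsilon)\to(x_0,x_0,t_0,t_0)$ together with $|x_\epsilon-y_\epsilon|^2/\epsilon^2\to 0$ and $(t_\epsilon-s_\epsilon)^2/\epsilon^2\to 0$.

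Next, I would apply the parabolic theorem on sums to $\Phi_\epsilon$ with the joint test $\varphi_\epsilon(x,y,t,s)=-\phi(x,t)+|x-y|^2/(2\epsilon^2)+(t-s)^2/(2\epsilon^2)$: this produces, for any $\eta>0$, a superjet $(a_1,p_1,X)\in\bar{\mathcal{P}}^{2,+}u_1(x_\epsilon,t_\epsilon)$ and a subjet $(a_2,p_2,Y)\in\bar{\mathcal{P}}^{2,-}u_2(y_\epsilon,s_\epsilon)$ satisfying $p_2-p_1=\nabla\phi(x_\epsilon,t_\epsilon)$, $a_2-a_1=\partial_t\phi(x_\epsilon,t_\epsilon)$, and a matrix estimate on $\mathrm{diag}(X,-Y)$ which, after being tested against vectors of the form $(v,v)$ and using the cancellation of the penalty contributions, reduces to $Y-X\geq D^2\phi(x_\epsilon,t_\epsilon)-o(1)$ once $\eta$ is chosen small enough relative to $\epsilon$. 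Combining the subsolution inequality $a_1\leq F(p_1,X)+f(u_1(x_\epsilon,t_\epsilon))$ with the supersolution inequality $a_2\geq F(p_2,Y)+f(u_2(y_\epsilon,s_\epsilon))$, invoking hypothesis (F1) with the increment $Y-X$ and vectors $p_1,p_2$, and using the monotonicity of $\mathcal{M}^-_{\Lambda_1,\Lambda_2}$, I obtain
\[
\partial_t\phi(x_\epsilon,t_\epsilon)\geq \mathcal{M}^-_{\Lambda_1,\Lambda_2}(D^2\phi(x_\epsilon,t_\epsilon))-\gamma|\nabla\phi(x_\epsilon,t_\epsilon)|+f(u_2(y_\epsilon,s_\epsilon))-f(u_1(x_\epsilon,t_\epsilon))-o(1).
\]

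Passing $\epsilon\to 0$ using continuity of $u_1,u_2,\phi,f$, and rewriting $f(u_2(x_0,t_0))-f(u_1(x_0,t_0))$ as $c(x_0,t_0)\tilde{w}(x_0,t_0)$ via (\ref{god}) (including the trivial case $u_1(x_0,t_0)=u_2(x_0,t_0)$), produces exactly the viscosity supersolution inequality (\ref{vis}) at the arbitrary test point $(x_0,t_0)$. The main obstacle I expect is the bookkeeping inside the parabolic theorem on sums: one must verify carefully that the cross-derivatives of the spatial penalty in $D^2_{(x,y)}\varphi_\epsilon$ precisely cancel the diagonal blocks when contracted against $(v,v)$, so that the $\epsilon^{-2}$ singularities in the matrix estimate disappear and only $-D^2\phi(x_\epsilon,t_\epsilon)$ survives; this in turn forces a coupled choice $\eta=\eta(\epsilon)\to 0$ making the resulting $o(1)$ error genuine. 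Once this technical point is handled, the rest of the argument proceeds in parallel with the elliptic case.
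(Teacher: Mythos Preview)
Your proposal is correct and follows essentially the same route as the paper: doubling variables, invoking the Crandall--Ishii--Lions theorem on sums, and using hypothesis (F1) to convert $F(p_2,Y)-F(p_1,X)$ into a Pucci term plus a gradient error before passing to the limit. The only noteworthy difference is that the paper doubles just the spatial variable while keeping a single common time in $\Phi_\epsilon(x,y,t)$, so that Theorem~8.3 of \cite{CIL} applies verbatim and delivers the relation $a_\epsilon+b_\epsilon=\partial_t\varphi$ as part of its conclusion; your extra penalization in time also works but is unnecessary here and makes the jet extraction slightly less direct.
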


\begin{proof}
We consider $w=u_1-u_2=\tilde{w}$, then apply the property of
$\mathcal {M}_{\Lambda_1, \Lambda_2}^-(D^2 w)=-\mathcal
{M}_{\Lambda_1, \Lambda_2}^+(D^2 \tilde{w})$ to verify (\ref{vis}).
Let $\varphi\in C^2$ be a test function such that $w-\varphi$ has a
local maximum at $(\tilde{x}, \tilde{t})$. Then there exists $r>0$
such that, for all $(x, t)\in \overline{\mathbb
B}_r(\tilde{x})\times (\tilde{t}-r,\, \tilde{t}]\subset \mathbb
R^n\times (0, \, T]$, $(w-\varphi)(x,t)<(w-\varphi)(\tilde{x},
\tilde{t})$. Define
$$
\Phi_{\epsilon}(x, y, t)=u_1(x, t)-u_2(y, t)-\varphi(x,
t)-\frac{|x-y|^2}{\eps^2}.$$ Let $(x_\eps, y_\eps, t_\eps)$ be the
maximum point of $\Phi_{\epsilon}(x, y, t)$ in $\overline{\mathbb
B}_r(\tilde{x})\times \overline{\mathbb
B}_r(\tilde{x})\times(\tilde{t}-r,\, \tilde{t}]$. Standard argument
shows that
\begin{equation}
\left \{ \begin{array}{ll}
 (i): (x_\eps, y_\eps)\to (\tilde{x}, \tilde{x}),
\\
\\
(ii): \frac{|x_\eps-y_\eps|^2}{\eps^2}\to 0
\end{array}
\right.
\end{equation}
 as $\eps\to 0$. Let
$\theta=\overline{\mathbb B}_r(\tilde{x})$ and $$\psi_{\eps}(x, y,
t)=\varphi(x, t)+\frac{|x_\eps-y_\eps|^2}{\eps^2}.$$ The argument of
Theorem 8.3 in \cite{CIL} indicates that, for all $\alpha>0$, there
exist $X, Y\in S_n(\mathbb R^n)$ such that
\begin{equation}
\left \{ \begin{array}{lll}
 (i): (a_\eps, D_x
\psi_{\eps}( x_\eps, y_\eps, t_\eps), X)\in \overline {\mathcal
{P}_\theta}^{2, +} u_1(x_\eps, t_\eps),  \\
\\
\indent  \ \ (b_\eps, D_y \psi_{\eps}( x_\eps, y_\eps, t_\eps),
Y)\in
\overline {\mathcal {P}_\theta}^{2, +} (-u_2)(y_\eps, t_\eps),  \\
\\
(ii): -(1/\alpha+\|A\|)Id\leq \left( \begin{array}{ccc} X & 0 \\
0 & Y
\end{array}
\right) \leq A+\alpha A^2,  \\
\\
(iii): a_\eps+b_\eps=\partial_t\psi_\eps( x_\eps, y_\eps,
t_\eps)=\partial_t \varphi(x_\eps, t_\eps),
\end{array}
\right. \label{last}
\end{equation}
where $A=D^2 \psi_\eps( x_\eps, y_\eps, t_\eps)= \left(
\begin{array}{ccc}
D^2_x\varphi(x_\eps, t_\eps)+\frac{2}{\eps^2}Id &
-\frac{2}{\eps^2}Id
\\
\\
-\frac{2}{\eps^2}Id & \frac{2}{\eps^2}Id \end{array} \right). $ \\
Furthermore, by the definition of ${\mathcal {P}_\theta}^{2, +},
{\mathcal {P}_\theta}^{2, -}$, we have
\begin{equation}
a_\eps-F(D_x\psi_\eps(x_\eps, y_\eps, t_\eps), X)- f(u_1(x_\eps,
t_\eps))\leq 0, \label{ar1}
\end{equation}
\begin{equation}
-b_\eps-F(-D_y\psi_\eps(x_\eps, y_\eps, t_\eps), -Y)- f(u_2(y_\eps,
t_\eps))\geq 0. \label{ar2}
\end{equation}
Combining $(iii)$ in  (\ref{last}), (\ref{ar1}) and (\ref{ar2}), we
obtain
\begin{equation}
\partial_t \varphi (x_\eps, t_\eps)-F(D_x\psi_\eps(x_\eps, y_\eps, t_\eps),
X)+F(-D_y\psi_\eps(x_\eps, y_\eps, t_\eps), -Y)-f(u_1(x_\eps,
t_\eps))+f(u_2(y_\eps, t_\eps))\leq 0.\label{cop}
\end{equation}
Let $\alpha=\eps^2$. A similar argument to that  in \cite{DS} leads to
\begin{equation}
X-D^2\varphi_\eps(x_\eps, t_\eps)+Y\leq -C\eps^2 Y^2+O(\eps)
\end{equation}
for some $C>0$. Then
\begin{equation}
\begin{array}{lll}
(\partial_t \varphi-\mathcal{M}^+_{\Lambda_1,
\Lambda_2}(D^2\varphi)-f(u_1))(x_\eps, t_\eps)+f(u_2)(y_\eps,
t_\eps)&-&\gamma |D_x\psi+D_y\psi|(x_\eps, y_\eps, t_\eps) \nonumber
\\
&+&C\eps^2\mathcal{M}^-_{\Lambda_1, \Lambda_2}(Y^2)+O(\eps)\leq 0.
\end{array}
\end{equation}

Since $\mathcal{M}^-_{\Lambda_1, \Lambda_2}(Y^2)\geq 0$,  letting
$\eps\to 0$, then
$$ (\partial_t \varphi-\mathcal{M}^+_{\Lambda_1,
\Lambda_2}(D^2\varphi)-\gamma |D_x\varphi|-f(u_1)+f(u_2))(\tilde{x},
\tilde{t})\leq 0.
$$
By the mean value theorem,
$$ (\partial_t \varphi-\mathcal{M}^+_{\Lambda_1,
\Lambda_2}(D^2\varphi)-\gamma |D_x\varphi|-c(\tilde{x},
\tilde{t})(u_1-u_2))(\tilde{x}, \tilde{t})\leq 0,
$$ where $c(x,t)$ is in (\ref{god}).
Hence
$$ \partial_t w-\mathcal{M}^+_{\Lambda_1,
\Lambda_2}(D^2 w )-\gamma |D_x w|-c(x, t)w\leq 0$$ for $(x, t)\in
\mathbb R^n\times (0, T]$. Since $\tilde{w}=-w$,
$$ -\partial_t \tilde{w}+\mathcal{M}^-_{\Lambda_1,
\Lambda_2}(D^2 \tilde{w} )-\gamma |D_x \tilde{w}|+c(x,
t)\tilde{w}\leq 0.$$ The proof of the lemma is then fulfilled.
\end{proof}

We are ready to give the proof of  Theorem \ref{th3}.

\begin{proof}[Proof of Theorem \ref{th3}]
We adopt the moving plane method to prove the theorem. Define
$$ \Sigma_{\lambda}=\{(x_1, x', t)\in \mathbb R^{n+1}|x_1<\lambda,
0<t \leq T\},$$ where $x'= \{x_2,\cdots, x_n\}$.  Set
$$ u_\lambda(x_1, x', t)=u(2\lambda-x_1, x', t) \ \mbox{and} \ v_{\lambda}(x, t)=u_\lambda(x, t)-u(x, t). $$

\emph{Step 1:} We start the plane from negative infinity. Since
$u_\lambda$ satisfies the same equation as $u$ does by \emph{(F2)}.
Thanks to Lemma \ref{ppp}, we have
$$
-\partial_t v_{\lambda}+\mathcal {M}_{\Lambda_1, \Lambda_2}^-(D^2
v_\lambda)-\gamma |\nabla v_\lambda|+ c(x,t) v_\lambda\leq 0.
$$
We may assume that $|c(x, t)|\leq c_0$ for some $c_0>0$, since
$f(u)$ is locally Lipschitz. Let
$$\bar v_{\lambda}=\frac{v_\lambda}{e^{-(c_0+1)t}},$$
then $\bar v_{\lambda}$ satisfies
\begin{equation}
-\partial_t \bar v_{\lambda}+\mathcal {M}_{\Lambda_1,
\Lambda_2}^-(D^2 \bar v_\lambda)-\gamma |\nabla \bar v_\lambda|+
\tilde{c}(x,t)\bar v_\lambda\leq 0,
\end{equation}
where $\tilde{c}(x,t)=c(x,t)-c_0-1$. Note that $\tilde{c}(x,t)<0$.
In order to prove that $ v_{\lambda}\geq 0$ in $\Sigma_{\lambda},$
it is sufficient to show that $\bar v_\lambda\geq 0$ in
$\Sigma_{\lambda}$. Suppose the contrary, that $\bar v_\lambda <0 $
somewhere in $\Sigma_{\lambda}$. Since
$$ |u(x,t)|\to 0 \  \mbox{uniformly as } |x|\to \infty,$$
then
$$v_\lambda(x, t)\geq -u(x, t) e^{(c_0+1)t} \to 0$$
as $|x|\to \infty$. Due to  the fact that $\bar v_\lambda=0$ on
$\partial \Sigma_\lambda:= \{(x_1, x', t)| x_1=\lambda, 0<t\leq T\}$
and the assumption of initial boundary condition $u_0(x)$, there
exists some point $z^0\in \Sigma_\lambda$ such that
$$\bar v_{\lambda}(z^0)=\min_{z\in \Sigma_\lambda} \bar
v_{\lambda}(x,t)<0.$$ By the strong maximum principle for fully nonlinear
parabolic equations, we know it is a contradiction. Step 1 is then completed.

\emph{Step 2:} Set
$$\lambda_0:=\sup\{\lambda<0 | v_{\mu}\geq 0 \  \mbox{in} \ \Sigma_{\mu} \ \mbox{for} -\infty< \mu<\lambda\}. $$
Our goal is to show that $\lambda_0=0$. Suppose that $\lambda_0<0$,
then there exists sufficiently small $\eps>0$ such that
$\lambda_0+\eps<0$. We are going to prove that $v_{\lambda}\geq 0$
in $\Sigma_{\lambda}$ for $\lambda=\lambda_0+\eps$, which
contradicts the definition of $\lambda_0$.  If
$v_{\lambda_0+\eps}<0$ somewhere in $\Sigma_{\lambda_0+\eps}$, by
the asymptotic behavior of $u$ and the initial boundary condition,
we know that the minimum point is achieved in the interior of
$\Sigma_{\lambda_0+\eps}$. By the same argument as that in Step 1,
we see it is impossible. Therefore, we confirm that  $\lambda_0=0$,
that is, $u$ is nondecreasing in $x_1$ and $u(x_1, x', t)\leq
u(-x_1, x', t)$ for $x_1\leq 0$.

If the initial value $u_0$ is radial symmetry and nonincreasing in
$|x|$. We move the plane from positive infinity to the left. By the
same argument as above, we will reach at $\lambda_0=0$ again, which
leads to the symmetry of the solution at $x_1=0$. By the rotation
invariance of the equation, we obtain that $u$ is radially symmetric
with respect to $(0, t)$ for any fixed $t\in (0, T]$ and
nonincreasing in $|x|$.

\end{proof}


\begin{thebibliography}{CL}

\bibitem{BD}M, Bardi and F. Da Lio, On the strong maximum principle for fully nonlinear degenerate
elliptic equations,  Arch. Math. (Basel) 73(1999), no. 4, 276-285.

\bibitem{BN} H. Berestycki and L. Nirenberg, On the method of
moving planes and the sliding method. Bol. Soc. Brasil. Mat. (N.S)
22(1992), 1-37.


\bibitem{CL}W. Chen and C. Li, Methods on nonlinear elliptic
equations. AIMS Series on Differential Equations Dynamical Systems,
 vol. 4, American Institute of Mathematical Sciences (AIMS), Springfield, MO, 2010.

\bibitem{CL1}W. Chen and C. Li, Classification of solutins of
some nonlinear ellitpic equations, Duke Math. J. 63(1991), 615-622.

\bibitem{CL2} A. Cutr$\grave{i}$ and F. Leoni, On the Liouville
property for fully nonlinear elliptic equations, Ann. Inst. H.
Poincar$\acute{e}$ Anal. Non Lin$\acute{e}$aire 17(2000), no.2,
219-245.

\bibitem{CC}X. Cabre and L. Caffarelli, Fully nonlinear elliptic
equation, Vol. 43, American Mathematical Society. Providence. RI,
1995.

\bibitem{CCKS}L. Caffarelli, M. G.  Crandall, M. Kocan and A. $\acute{S}$wiech, On
viscosity solutions of fully nonlinear equations with measurable
ingredients, Comm. Pure Appl. Math. 49(1996), 365-397.

\bibitem{CIL} M.G. Crandall, H. Ishhii, P-L. Lions, User's guide
to viscosity solutions of second order paritial differential
equaitions, Bull. Amer. Math. Soc. (N.S.) 27(1992), no. 1, 1-67.

\bibitem{CLN}L. Caffarelli, Y.Y.Li and L. Nirenberg, Some remarks on
singular solutions of nonlinear elliptic equations.
 I,  J. Fixed Point Theory Appl. 5(2009), no. 2, 353-395.

\bibitem{CLN1}L. Caffarelli, Y.Y.Li and L. Nirenberg, Some remarks on singular solutions of nonlinear elliptic
equations. II: symmetry and monotonicity via moving planes,
preprint.


\bibitem{CMS}F. Charro, L. Monotoro and B. Sciunzi, Monotonicty
of solutions of fully nonlinear uniformly elliptic equation in the
half-plane, J. Differential Equations 251(2011), 1562-1579.

\bibitem{DS} F. Da Lio and B. Sirakov, Symmetry results for
viscosity solutions of fully nonlinear elliptic equations, J. Eur.
Math. Soc. 9(2007), 317-330.

\bibitem{FQ}P. Felmer and A.  Quaas, On critical exponents for
the Pucci's extremal operators, Ann. Inst. H. Poincar$\acute{e}$
Anal. Non Lin$\acute{e}$aire 20(2003), no. 5, 843-865.


\bibitem{GNN1} B. Gidas, W. M. Ni and L. Nirenberg, Symmetry and
related properties via maximum principle. Comm. Math. Phys. 68
(1979), 209-243.

\bibitem{GNN2} B. Gidas, W.M. Ni and L. Nirenberg, Symmetry of positive solutions of nonlinear
elliptic equations in $\mathbb{R^N}$. Mathematical analysis and
applications, Part A,  Adv. in Math. Suppl. Stud., 7a, Academic
Press, New York-London, 1981,  369-402.

\bibitem{L} C. Li,  Monotonicity and symmetry of solutions of fully
nonlinear elliptic equations on unbounded domains, Comm. Partial
Differential Equation 16(1991), no. 4-5, 585-615.

\bibitem{LN} Y. Li and W. M. Ni, Radial symmetry of positive
solutions of nonlinear elliptic equation in $\mathbb R^n$, Comm.
Partial Differential Equations 18(2003), 1043-1052.

\bibitem{MQ} R. Meneses and A. Quaas, Fujita type exponent for
fully nonlinear parabolic equations and existence results, J. Math.
Anal. Appl. 376(2011), 514-527.

\bibitem{P}P. Pol$\acute{a}\check{c}$ik, Symmetry properties of
positive solutions of parabolic equaition: a survey, in Recent
Progress on Reaction-Diffusion Systems and Viscosity Solutions,
World Scientific, Hackensack, NJ, 2009, 170-208.


\bibitem{QS}A. Quaas and B. Sirakov, Existence results for nonproper elliptic equations involving the Pucci
 operator, Comm. Partial Differential Equations 31(2006), no. 7-9, 987-1003.

\bibitem{S} J. Serrin, A symmetry problem in potential theory,
Arch. Ration. Mech. Anal. 43(1971), 304-318.

\bibitem{T} S. Terracini, On positive entire solutions to a class of
equations with a singular coefficient and critical exponent. Adv.
Differential Equations 1(1996), no. 2, 241-264.

\bibitem{W} L. Wang, On the regularity thoery of fully nonlinear
parabolic equation: I, Comm. Pure Appl. Math. 45(1992), 27-76.

\end{thebibliography}
\end{document}